\newcommand{\floor}[1]{{\lfloor #1 \rfloor}}
\newcommand{\ceil}[1]{{\lceil #1 \rceil}}
\newcommand{\Z}{\ensuremath{\mathbb{Z}}}
\newcommand{\N}{\ensuremath{\mathbb{N}}}
\newcommand{\R}{\ensuremath{\mathbb{R}}}
\newcommand{\E}{\ensuremath{\mathbb{E}}}
\renewcommand{\P}{\ensuremath{\mathbb{P}}}
\newtheorem{thm}{Theorem}[section]
\newtheorem{lemma}[thm]{Lemma}
\newtheorem{cor}[thm]{Corollary}
\newtheorem{prop}[thm]{Proposition}
\theoremstyle{definition}
\newtheorem{defn}[thm]{Definition}
\newtheorem{rem}[thm]{Remark}
\DeclareMathOperator{\intt}{int}
\DeclareMathOperator{\diam}{diam}
\newcommand{\dd}{\text{d}}
\newcommand{\mm}{\lsem m \rsem}
\newcommand{\be}{\begin{equation}}
\newcommand{\ee}{\end{equation}}
\numberwithin{equation}{section}
\definecolor{Red}{rgb}{1,0,0}
\definecolor{Blue}{rgb}{0,0,1}
\definecolor{Olive}{rgb}{0.41,0.55,0.13}
\definecolor{Yarok}{rgb}{0,0.5,0}
\definecolor{Green}{rgb}{0,1,0}
\definecolor{MGreen}{rgb}{0,0.8,0}
\definecolor{DGreen}{rgb}{0,0.55,0}
\definecolor{Yellow}{rgb}{1,1,0}
\definecolor{Cyan}{rgb}{0,1,1}
\definecolor{Magenta}{rgb}{1,0,1}
\definecolor{Orange}{rgb}{1,.5,0}
\definecolor{Violet}{rgb}{.5,0,.5}
\definecolor{Purple}{rgb}{.75,0,.25}
\definecolor{Brown}{rgb}{.75,.5,.25}
\definecolor{Grey}{rgb}{.7,.7,.7}
\definecolor{Black}{rgb}{0,0,0}
\newcommand{\ignore}[1]{{}}
\renewcommand{\P}{\ensuremath{\mathbb{P}}}
\title{Random Delta-Hausdorff-attractors }
\author{
Michael Scheutzow $^{1}$
\and Maite Wilke-Berenguer $^{1}$
}
\date{\today}
\begin{document}
\normalem

\maketitle
{\footnotesize

\thanks{$^1$ Institut f\"ur Mathematik, Technische Universit\"at Berlin, MA 7-5, Stra\ss e des 17. Juni 136, 10623 Berlin, Germany;\\
{\it e-mail:} \href{mailto:ms@math.tu-berlin.de}{ms@math.tu-berlin.de}, \href{mailto:wilkeber@math.tu-berlin.de}{wilkeber@math.tu-berlin.de}
}}

\maketitle

\begin{abstract}
Global random attractors and random point attractors for random dynamical systems have been studied for several decades. Here we introduce two intermediate 
concepts: $\Delta$-Hausdorff-attractors  are characterized by attracting all deterministic compact sets of Hausdorff dimension at most $\Delta$, where $\Delta$ is 
a non-negative number, while cc-attractors attract all countable compact sets. We provide two examples showing that a given random dynamical system may have various different $\Delta$-Hausdorff-attractors for different values 
of $\Delta$. It seems that both concepts are new even in the context of deterministic dynamical systems.
\end{abstract}

\noindent \textbf{Mathematics Subject Classification (2010):} 37H99; 37H10, 37B25, 37C70. 

\noindent \textbf{Keywords:} Random attractor, Delta-attractor, pullback attractor, weak attractor, forward attractor, cc-attractor, 
random dynamical system, Hausdorff dimension.


\section{Introduction} \label{sec:intro}
\emph{Global} random attractors as well as random \emph{point} attractors for random dynamical systems are established notions having been studied for several decades. They are, in a sense, different ends to a spectrum and we here introduce an intermediary concept, namely random $\Delta$-Hausdorff-attractors. A random $\Delta$-Hausdorff-attractor for a random dynamical system (RDS) taking values in a metric space $(E,\dd)$ is characterized by attracting all deterministic compact sets of Hausdorff dimension at most $\Delta$, where $\Delta$ is a non-negative number. As in the case of global or point attractors the attraction can be 
in the pullback, forward or weak sense (precise definitions are provided in the next section). We also introduce random cc-attractors which are required to attract 
all deterministic countable compact sets and as such are located `in between' the concepts of point and $\Delta$-Hausdorff-attractors. While global random attractors were introduced in \cite{CF94} and point attractors first appear in \cite{Crauel_PointVsSet} and both have been thoroughly discussed in many papers thereafter, the concepts of $\Delta$-Hausdorff-attractor and cc-attractor to the best of our knowledge seem to be new even in the deterministic case. 

In order to illustrate the usefulness and naturalness of the concepts we provide two examples. The first is a product of one-dimensional martingale diffusions on $[0,1]^d$. For it, we 
identify different kinds of  attractors of the associated RDS as well as of its time reversal. The second example is more challenging: 
its state space is the simplex $S^{m-1}$ and its dynamics are described by iterated independent Volterra polynomial stochastic operators - a concept originating in the mathematical modeling of population genetics. We are not able to identify all $\Delta$-Hausdorff-attractors for each value of $\Delta$ in this case, but we provide an example of a non-trivial $\Delta$-Hausdorff-attractor for a $\Delta>0$ that coincides with the point, but differs from the global attractor. It should be noted that the second example can be found in the dissertation of one of the authors, cf. \cite{MyDiss}.

As both the cc- and the $\Delta$-Hausdorff-attractors seem to be new concepts a myriad of questions arise. One could, for example, ask for criteria to determine the existence of cc- or $\Delta$-Hausdorff-attractors, that differ from the point or the global attractor (or from each other for different values of $\Delta$). Additional examples including phenomena like $\Delta$-Hausdorff-attractors that are themselves of (a non-integer) Hausdorff dimension $\Delta$ or comparative studies in the deterministic set-up would help deepen the understanding. 


\section{Notation and Definitions} \label{sec:prelim}
We begin with a brief introduction to the basic notions of random dynamical systems (RDS) and the notions of attractors. 
\begin{defn}
Let $(\Omega, \mathcal F, \P)$ be a probability space, $\mathbb T_1 \in \{\R, \Z\}$ endowed with its Borel-$\sigma$-algebra $\mathcal B(\mathbb T_1)$. Denote by $(\vartheta_t)_{t \in \mathbb T_1}$ a family of jointly measurable maps such that
\begin{enumerate}[label=\roman*]
 \item $\forall\, t \in \mathbb T_1$: $\P\circ\vartheta_t^{-1}=\P$ \label{enum:thetainv}
 \item $\vartheta_0 = \text{Id}_{\Omega}$ and $\forall\, t,s \in \mathbb T_1:$ $\vartheta_{t+s} = \vartheta_t\circ \vartheta_s$.\label{enum:thetagroup}
\end{enumerate}
Then $(\Omega, \mathcal F, \P, (\vartheta_t)_{t \in \mathbb T_1})$ is called a \emph{metric dynamical system (MDS)}. 
Now let $(E,\dd)$ be a metric space and $\mathbb T_2 \in \{\R^+_0, \R, \N_0, \Z\}$ such that $\mathbb T_2 \subseteq \mathbb T_1$, endowed with their Borel-$\sigma$-algebras $\mathcal B(E)$ and $\mathcal B(\mathbb T_2)$ respectively. A measurable map $ \varphi: \mathbb T_2\times\Omega\times E \rightarrow E,\; (t,\omega,x)  \mapsto \varphi(t,\omega)x$  with the properties
\begin{enumerate}[label=\roman*,resume]
 \item $\forall\, \omega \in \Omega:$ $\varphi(0,\omega) = \text{Id}_{E}$ and 
 \item $\forall\, t,s \in \mathbb T_2:$ $\varphi(t+s,\omega) = \varphi(t, \vartheta_s\omega) \circ \varphi(s, \omega)$, \label{enum:phicocycle}
\end{enumerate}
is called a \emph{cocycle} and the ensemble $(\Omega, \mathcal F, \P, (\vartheta_t)_{t \in \mathbb T_1}, \varphi)$ is a \emph{random dynamical system (RDS)}.
\end{defn}

 If $\mathbb T_2 \in \{\R, \Z\}$, for all $t \in \mathbb T_2$ and $\omega \in \Omega$ the map $\varphi(t,\omega)$ is a bimeasurable bijection of $E$ and $  \varphi(t,\omega)^{-1}=\varphi(-t,\vartheta_t\omega)$.
 Moreover, the mapping $(t,\omega,x) \mapsto \varphi(t,\omega)^{-1}x$ is $\mathcal B(\mathbb T_2) \otimes F \otimes \mathcal B(E)$-$\mathcal B(E)$-measurable. (See Theorem 1.1.6 in \cite{Arnold_book}.) 
 
 For any $x \in E$ and any non-empty $B \subseteq E$ set $  \dd(x,B):= \inf_{y \in B} \dd(x,y)$ and for any other non-empty $A \subseteq E$ and bounded $B$ define $\dd(A,B):= \sup_{x \in A}\dd(x,B) = \sup_{x \in A}\inf_{y \in B} \dd(x,y)$ the Hausdorff semi-distance induced by $\dd$. Using the same letter for the metric and for the Hausdorff semi-distance induced by it should not cause confusion and will be done so consistently for any arising metric. For a set $A \subseteq E$ and a $\delta > 0$, set $A^{\delta}:=\{x \in E \mid \dd(x,A)<\delta\}$. 
 
Let $\mathcal P(E)$ denote the power-set of $E$. A set-valued map $A:\Omega\rightarrow \mathcal P(E)$ is called a  \emph{compact random set}, if for each $\omega \in \Omega$ $A(\omega)$ is compact and for each $x \in E$, the map $ \omega \mapsto d(x,A(\omega))$ is measurable. Such a set is said to be \emph{(strictly) $\varphi$-invariant} for the RDS $\varphi$, if for every $t \in \mathbb T_2$
 \begin{align*}
  \varphi(t,\omega)A(\omega) = A(\vartheta_t\omega)\qquad \text{ for } \P\text{-almost all } \omega \in \Omega.
 \end{align*}

 We are now ready to define the notion of an attractor of an RDS.
 
 \begin{defn}\label{def:att}
  Let $(\Omega, \mathcal F, \P,(\vartheta_t)_{t \in \mathbb T}, \varphi)$ be a random dynamical system over the metric space $(E,\dd)$. Let $\mathcal C\subseteq \mathcal P(E)$ be an arbitrary non-empty subset of the power-set of $E$ and $A$ a compact random set that is strictly $\varphi$-invariant.
  \begin{enumerate}
   \item $A$ is called a \emph{forward $\mathcal C$-attractor}, if for every $C \in \mathcal C$
   \begin{align*}
    \lim_{t \rightarrow \infty} \dd(\varphi(t,\omega)C,A(\vartheta_t\omega)) = 0 \qquad \P\text{-a.s.}
   \end{align*}
    \item $A$ is called a \emph{pullback $\mathcal C$-attractor}, if for every $C \in \mathcal C$
   \begin{align*}
    \lim_{t \rightarrow \infty} \dd(\varphi(t,\vartheta_{-t}\omega)C,A(\omega)) = 0 \qquad \P\text{-a.s.}
   \end{align*}
   \item $A$ is called a \emph{weak $\mathcal C$-attractor}, if for every $C \in \mathcal C$
   \begin{align*}
    \lim_{t \rightarrow \infty} \dd(\varphi(t,\omega)C,A(\vartheta_t\omega)) = 0 \qquad \text{in probability.}
   \end{align*}
  \end{enumerate}
 \end{defn}
 
 Since in general the subsets of $\Omega$ under consideration above need not be measurable, the statements are to be understood in the sense that there exist suitable measurable sets containing, respectively being contained in the sets above. The notions of \emph{forward} and \emph{pullback} attractor coincide when the convergence is considered to be \emph{in probability} only, hence the absence of such a denomination in the definition of a weak attractor. Certain families of sets $\mathcal C$ are of particular interest and their attractors are thus referred to by specific terms:  If $\mathcal C = \mathcal K:=\{K \subset E\mid K\neq \emptyset\text{ is compact }\}$, the set of \emph{all} compact subsets of $E$, then $A$ is called  \emph{global} attractor. On the other hand, if  $\mathcal C = \{\{x\}\mid x \in E\}$, then $A$ is called \emph{point} attractor. Several of these notions were introduced in \cite{Crauel_PointVsSet, CF94} and \cite{Ochs}. For a comparison of different concepts of attractors we recommend \cite{ScheutzowComparison} as well as \cite{Crauel_PointVsSet}.
 
 However, as illustrated by the examples in Sections \ref{sec:Ex1} and \ref{sec:Ex2}, in some cases a notion `in between' these two concepts arises naturally. To define this, we recall the definition of the \emph{Hausdorff dimension} $\dim_H$ on a metric space $(E,\dd)$. For $H \subset E$ a sequence $E_1, E_2, \ldots$ of subsets of $E$ is called a \emph{cover of $H$}, if $ H \;\subseteq \bigcup_{i \in \N}E_i.$ 
For every $\delta \geq0$ and $\varepsilon >0$ define the \emph{$\delta$-Hausdorff-measure} of $H$ as 
 \begin{align*}
   \mathcal H^{\delta}(H) :=\sup_{\varepsilon>0}\inf\left\{\sum_{i \in \N}\diam(E_i)^{\delta} \,\big\mid\,E_1, E_2, \ldots \text{cover of } H,\forall\, i \in \N:\; \diam(E_i)<\varepsilon\right\}.
 \end{align*}
The \emph{Hausdorff dimension} of $H$ is then given by $\dim_H(H):=\inf\{\delta\mid\mathcal H^{\delta}(H)=0\}$. Without loss of generality we may assume the sets in the covers to be open (see for example  \cite{HausdorffDim}).

 \begin{defn} In the set-up of Definition \ref{def:att} if $\mathcal C = \{ C \in \mathcal K\mid C \text{ is countable}\}$, then $A$ is called a \emph{cc-attractor}. And for any $\Delta\geq 0$, if $\mathcal C = \mathcal C_{\Delta}:=\{K \in \mathcal K\mid \dim_H(K)\leq \Delta\}$, then $A$ is called a \emph{$\Delta$-Hausdorff-attractor}.
 \end{defn}

 Given the multitude of terms in the precise description of an attractor (`forward', `pullback', `weak', `global', `point'...) we will omit the addendum of `Hausdorff' and simply speak of a \emph{$\Delta$-attractor}, in particular when $\Delta$ is fixed. 
 
 Obviously, a global pullback attractor is a pullback $\Delta$-attractor for each $\Delta \ge 0$, each pullback $\Delta$-attractor is a pullback $\Delta'$-attractor 
if $\Delta' \le \Delta$, each pullback $\Delta$-attractor is a pullback cc-attractor and a pullback cc-attractor is a pullback point attractor. The same 
holds true with `pullback' replaced by either `forward' or `weak'. Note that the concepts of global, cc- and point attractors only depend on the topology 
of $(E,\dd)$ and not on the chosen metric $\dd$. This is generally not true for $\Delta$-Hausdorff-attractors. 
 
 It was proven in \cite{SbN}, Lemma 1.3, that the global weak attractor of an RDS that is continuous in the space-variable is $\P$-almost surely unique. Hence, if they exist, the  global forward attractor and the  global pullback attractor must coincide and they are unique $\P$-almost surely. (However, they need not exist.)
 Since uniquenes does not necessarily hold for other than global  attractors, we introduce the notion of a \emph{minimal} attractor.
 \begin{defn}
   A  $\mathcal C$-attractor for $\varphi$ is called \emph{minimal}, if it is contained in any other $\mathcal C$-attractor of the same type. 
 \end{defn}
 It was recently proven in \cite{ScheutzowCrauel}, that such a minimal attractor exists in the  pullback or weak sense, given \emph{any} such attractor exists. However, a minimal attractor need not exist in the forward sense.


\section{The first example} \label{sec:Ex1}
Our first example to illustrate the notion of $\Delta$-attractors is based on the SDE 
\begin{align}\label{eq:SDEEx1}
 \dd X^i(t) = X^i(t)(1-X^i(t))\dd W^i(t), \qquad i = 1, \ldots, d
\end{align}
on the space $E=[0,1]^d$ (equipped with the metric $\dd$ induced by the uniform norm), for some $d \in \N$. Here $W=(W^1, \ldots, W^d)$ is a two-sided $d$-dimensional Wiener process. 

Let the probability space $(\Omega,\mathcal F, \P)$ be the $d$-dimensional two-sided Wiener space and complete it to an MDS with the Wiener shift $(\vartheta_t)_{t \in \R}$ (i.e. $(\vartheta_t \omega)_s = \omega_{s+t}-\omega_t$). Theorem 2.3.39 in \cite{Arnold_book} guarantees the existence of a (two-sided) RDS $\varphi$ on $[0,1]^d$ over $(\Omega,\mathcal F, \P, (\vartheta_t)_{t \in \R})$  solving \eqref{eq:SDEEx1} for $t \geq 0$ and that is jointly continuous in time and space. Furthermore it implies that its inverse flow $\bar \varphi(t,\omega) := (\varphi(t,\vartheta_{-t}\omega))^{-1}$ on $[0,1]^d$ over $(\Omega,\mathcal F,\P, (\bar \vartheta_t)_{t \in \R})$ with $\bar \vartheta_t = \vartheta_{-t}$ exists, is jointly continuous in time and space and solves 
\begin{align}\label{eq:SDEEx1_inv}
 \dd Y^i(t) = Y^i(t)(1-Y^i(t))(1-2Y^i(t))\dd t - Y^i(t)(1-Y^i(t))\dd W^i(t), \qquad i = 1, \ldots, d,
\end{align}
where \eqref{eq:SDEEx1_inv} is also to be understood as a forward It\=o SDE.

Let us begin with some rather straightforward observations regarding attractors of the forward RDS $\varphi$. It is easily verified that $[0,1]^d$ is the \emph{global forward} and \emph{global pullback} attractor for $\varphi$ on $[0,1]^d$. In addition, note that for each $x \in E$ each coordinate of $t \mapsto \varphi(t,\cdot)x$ is a martingale and converges $\P$-a.s.\ to $0$ or $1$ independently of the other coordinates, whence we can conclude that $\{0,1\}^d$ is a forward point-attractor of $\varphi$. Since all elements of $\{0,1\}^d$ are fixed points for $\varphi$, this is even the \emph{minimal forward point-attractor}.

 This example, however, also allows for a precise description of its minimal (forward) $\Delta$-attractors. For any $\alpha \in \{-1,0,1\}^d$, define $\Gamma_{\alpha}:= \{x \in [0,1]^d \mid \forall \, i =1, \ldots, d:\, \alpha_i \neq 0 \Rightarrow x^i=(1+\alpha^i)/2\}$. We will call this a \emph{face} of $[0,1]^d$.  For any $m \leq d$ then set $H^m := \bigcup_{\Vert \alpha \Vert = d-m} \Gamma_{\alpha}$ which is the union of all $m$-dimensional faces. 

\begin{thm}\label{thm:delta_RDS1}
 For any $m = 0, \ldots, d-1$ the set $H^m$ is the \emph{minimal forward $\Delta$-attractor} and the \emph{minimal weak $\Delta$-attractor} of $\varphi$ on $[0,1]^d$ 
for every $\Delta \in [m, m+1[$.
\end{thm}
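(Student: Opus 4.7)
The plan is to exploit that the SDE \eqref{eq:SDEEx1} decouples into $d$ independent one-dimensional Wright--Fisher flows $\varphi_i(t,\omega^i)$ on $[0,1]$, each a monotone homeomorphism fixing $0$ and $1$. Invariance of $H^m$ is immediate: in each face $\Gamma_\alpha$ the frozen coordinates lie in $\{0,1\}$, which are fixed points of \eqref{eq:SDEEx1}, so $\varphi(t,\omega)\Gamma_\alpha=\Gamma_\alpha$, and $H^m$ is a deterministic $\varphi$-invariant compact set.

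For minimality (in both the forward and weak senses) I would argue that each face $\Gamma_\alpha$ with $\|\alpha\|=d-m$ is a compact, $\varphi$-invariant element of $\mathcal C_\Delta$ (Hausdorff dimension $m\leq\Delta$). Hence for any such $\Delta$-attractor $A$, the attraction property applied to $\Gamma_\alpha$ yields $\dd(\Gamma_\alpha,A(\vartheta_t\omega))\to 0$ in probability. For a fixed $b\in\Gamma_\alpha$, $\vartheta_t$-invariance of $\P$ shows $\dd(b,A(\vartheta_t\omega))$ has the same law as $\dd(b,A(\omega))$, which forces $\dd(b,A(\omega))=0$ a.s. Running $b$ over a countable dense subset of $\Gamma_\alpha$ and using closedness of $A(\omega)$ yields $\Gamma_\alpha\subseteq A(\omega)$ a.s.; a finite union over $\alpha$ then gives $H^m\subseteq A(\omega)$ a.s.

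The main step is to show that $H^m$ is itself a forward $\Delta$-attractor, from which the weak version is automatic. The key geometric fact is that $\dd(y,H^m)<\varepsilon$ iff at least $d-m$ of the distances $\dd(y^i,\{0,1\})$ are below $\varepsilon$, so $\dd(\varphi(t,\omega)K,H^m)\geq\varepsilon$ iff there exist $x\in K$ and $I\subseteq\{1,\ldots,d\}$ with $|I|=m+1$ such that $x^i\in B_{\varepsilon,i}(t,\omega):=\{u\in[0,1]:\varphi_i(t,\omega^i)u\in[\varepsilon,1-\varepsilon]\}$ for every $i\in I$. By monotonicity each $B_{\varepsilon,i}(t,\omega)$ is a random interval. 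A one-dimensional analysis, using Fubini together with bounded-martingale convergence of $t\mapsto\varphi_i(t,\cdot)u$ and monotonicity in $u$, produces for almost every $\omega$ a ``critical point'' $c_i^\ast(\omega)\in[0,1]$ with $\varphi_i(t,\omega^i)u\to 0$ for $u<c_i^\ast$ and $\to 1$ for $u>c_i^\ast$, and this forces $B_{\varepsilon,i}(t,\omega)\subseteq(c_i^\ast(\omega)-\delta,c_i^\ast(\omega)+\delta)$ eventually, for every $\delta>0$. The optional-stopping identity $\P(c_i^\ast<u)=\P(\varphi_i(\cdot,\omega^i)u\to 1)=u$ gives $c_i^\ast\sim\mathrm{Unif}[0,1]$, and independence of the $W^i$ yields $(c_i^\ast)_{i\in I}\sim\mathrm{Unif}[0,1]^{m+1}$.

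The Hausdorff-dimension hypothesis enters precisely here: the projection $\pi_I(K)\subseteq[0,1]^{m+1}$ onto the coordinates in $I$ satisfies $\dim_H\pi_I(K)\leq\dim_H K\leq\Delta<m+1=|I|$ and hence has Lebesgue measure zero. Therefore $\P((c_i^\ast)_{i\in I}\in\pi_I(K))=0$, and by compactness $\dist((c_i^\ast)_{i\in I},\pi_I(K))>0$ a.s.; combined with the fact that $\prod_{i\in I}B_{\varepsilon,i}(t,\omega)$ is eventually contained in any neighbourhood of $(c_i^\ast)_{i\in I}$, this shows $\prod_{i\in I}B_{\varepsilon,i}(t,\omega)\cap\pi_I(K)=\emptyset$ for all sufficiently large $t$, a.s. A union bound over the finitely many index sets $I$ produces $\dd(\varphi(t,\omega)K,H^m)<\varepsilon$ for all large $t$, completing the forward-attractor property. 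The step I expect to have to argue most carefully is the contraction statement for $B_{\varepsilon,i}$: namely establishing that $c_i^\ast$ is well-defined for almost every $\omega$ (with the claimed uniform law) and upgrading Fubini's almost-sure convergence at Lebesgue-typical $u$ to uniform control on $[0,c_i^\ast-\delta]$ and $[c_i^\ast+\delta,1]$ via monotonicity of $\varphi_i(t,\omega)$.
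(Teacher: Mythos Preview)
Your argument is correct and structurally identical to the paper's: both project the compact set onto each $(m{+}1)$-tuple of coordinates, use $\dim_H\le\Delta<m{+}1$ to conclude the projection has $(m{+}1)$-dimensional Lebesgue measure zero, note that the vector of one-dimensional separating points (your $c_i^\ast$, the paper's $b^i$ from \eqref{eq:defb}) is uniformly distributed and hence a.s.\ misses the projection by a positive margin, and then invoke monotonicity of each $\varphi_i$ to get uniform convergence to $\{0,1\}$ outside any neighbourhood of the critical point (this is exactly Lemma~\ref{lem:b}); the minimality argument via invariance of the $m$-faces is also the same. The one genuine difference is how you obtain the uniform law of $c_i^\ast$: you read it off directly from the bounded-martingale identity $\P(\varphi_i(\cdot)u\to 1)=u$, which is more elementary and self-contained than the paper's detour through the inverse flow $\bar\varphi$, its speed measure, and the synchronization result of Lemma~\ref{lem:synch}---machinery the paper needs anyway for Theorems~\ref{thm:point_RDS1_inv} and~\ref{thm:att_RDS1_inv}, but which your shortcut renders unnecessary for Theorem~\ref{thm:delta_RDS1} on its own.
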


\begin{rem}
The identification of the minimal \emph{pullback} $\Delta$-attractors seems to be tricky. Let us just point out that Theorem \ref{thm:delta_RDS1} will definitely not hold with `forward' 
replaced by `pullback'. Consider the case $d=1$. Theorem \ref{thm:delta_RDS1} tells us that the minimal forward $\Delta$-attractor of $\varphi$ is $\{0,1\}$ if $\Delta \in [0,1[$. The pullback point attractor of $\varphi$ is however the whole space $E=[0,1]$: loosely speaking, for each $x \in ]0,1[$, $\varphi(t,\theta_{-t}\omega)x$ will sometimes be very close to 1 and sometimes very close to 0 when $t$ becomes large taking all intermediate values in between by continuity. 
\end{rem}

For the inverse flow $\bar \varphi$ on the other hand, we are able to obtain a complete characterization of all kinds of attractors introduced above.

As in the case of $\varphi$, the \emph{global forward attractor} and the \emph{global pullback attractor} of the inverse flow $\bar\varphi$ on $[0,1]^d$ are given by the complete space itself. The following two theorems give a detailed description of the remaining types of attractors, but require the introduction of some notation first. For $i = 1, \ldots, d$, $j \in \{ 0,1\}$ 
and $\omega \in \Omega$ define $M^i_j(\omega):=\{x \in [0,1]^d\mid \lim_{t \rightarrow \infty}\varphi^i(t, \omega)x^i = j\}$ and set
\begin{align}\label{eq:defb}
b^i(\omega):= \sup\{x^i\mid x \in M^i_0\},\qquad \text{and} \qquad b:=(b^1, \ldots, b^d). 
\end{align}
 By monotonicity and the componentwise convergence to $\{0,1\}$ observed above we also have $b^i=\inf\{x^i\mid x \in M^i_1\}$ $\P$-a.s.\ for any $i=1.\ldots,d$. Denote by $P_{\alpha}$ the orthogonal projection on the face $\Gamma_{\alpha}$.

\begin{thm}\label{thm:point_RDS1_inv}
 The \emph{minimal forward point attractor}, the \emph{minimal pullback point attractor} and the \emph{minimal weak point attractor} of $\bar \varphi$ on $[0,1]^d$ are all given by $A:=\{P_{\alpha}(b)\mid \alpha\in\{-1,0,1\}^d\}$, for $b$ defined in \eqref{eq:defb}. 
 Furthermore, $b$ is uniformly distributed on $]0,1[^d$.
\end{thm}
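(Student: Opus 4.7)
The plan is to work coordinatewise, since the driving noise $W=(W^1,\ldots,W^d)$ has independent components and the SDE \eqref{eq:SDEEx1} decouples into $d$ independent one-dimensional problems; the $d$-dimensional assertions then follow by taking products. I would proceed in four steps: distribution of $b$, strict invariance of $A$, the pullback/weak identification, and finally the forward identification together with minimality.

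\textbf{Distribution of $b$.} The process $t\mapsto \varphi^i(t,\omega)x$ is a bounded $[0,1]$-valued martingale with absorbing boundary points $\{0,1\}$, hence converges almost surely to a random limit $X^i_\infty(x)\in\{0,1\}$ with $\E[X^i_\infty(x)]=x$, giving $\P(X^i_\infty(x)=1)=x$. For every fixed $\omega$, $y\mapsto \varphi^i(t,\omega)y$ is a continuous strictly increasing bijection of $[0,1]$, so by monotonicity the event $\{X^i_\infty(x)=1\}$ coincides up to null sets with $\{x>b^i(\omega)\}$. Hence $\P(b^i<x)=x$, so each $b^i$ is uniform on $[0,1]$, and the independence of the $W^i$ gives $b$ uniform on $\,]0,1[^d$.

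\textbf{Invariance and pullback/weak attractor.} Applying the cocycle identity to the definition of $b^i$ shows $\varphi^i(s,\omega)b^i(\omega)=b^i(\vartheta_s\omega)$ for every $s\in\R$, equivalently $\bar\varphi^i(s,\omega)b^i(\omega)=b^i(\bar\vartheta_s\omega)$. Together with the fact that the corners $\{0,1\}^d$ are fixed points of both flows, this yields $\bar\varphi(t,\omega)A(\omega)=A(\bar\vartheta_t\omega)$, so $A$ is strictly $\bar\varphi$-invariant. For the pullback identification, write $\bar\varphi(t,\bar\vartheta_{-t}\omega)=\varphi(t,\omega)^{-1}$; for $x^i\in\,]0,1[$, since $\varphi^i(t,\omega)y\to 0$ when $y<b^i(\omega)$ and $\to 1$ when $y>b^i(\omega)$, the monotone continuous preimage $\varphi^i(t,\omega)^{-1}x^i$ must accumulate at $b^i(\omega)$; for $x^i\in\{0,1\}$ the preimage is $x^i$. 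Hence $\varphi(t,\omega)^{-1}x\to P_\alpha(b(\omega))\in A(\omega)$, where $\alpha$ encodes which coordinates of $x$ lie on the boundary. The weak attractor property is then automatic because $\dd(\bar\varphi(t,\omega)x,A(\bar\vartheta_t\omega))$ and $\dd(\varphi(t,\omega)^{-1}x,A(\omega))$ have the same distribution by $\vartheta$-invariance of $\P$.

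\textbf{Forward attractor and minimality.} This is where I expect the main obstacle, since the distributional identity above only yields convergence in probability, whereas the forward attractor requires almost sure convergence of $\dd(\bar\varphi(t,\omega)x,A(\bar\vartheta_t\omega))$ along the same $\omega$. I would argue by one-dimensional synchronization of \eqref{eq:SDEEx1_inv}: for each $i$ with $x^i\in\,]0,1[$, the trajectories $\bar\varphi^i(t,\omega)x^i$ and $\bar\varphi^i(t,\omega)b^i(\omega)=b^i(\bar\vartheta_t\omega)$ are driven by the same Wiener path and, by 1D flow monotonicity, cannot cross; the restoring drift $y(1-y)(1-2y)$ towards $1/2$ combined with the recurrence of the diffusion in $]0,1[$ (the boundaries $0$ and $1$ are repelled by the drift) should force their separation to shrink to zero almost surely, which is the classical 1D synchronization phenomenon. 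Boundary coordinates $x^i\in\{0,1\}$ contribute nothing to the distance. Minimality in each of the three senses then follows from the general principle that any point attractor of the given type must be closed and must contain every almost sure limit of $\bar\varphi(t,\bar\vartheta_{-t}\omega)x$ (extracting an a.s.\ convergent subsequence in the weak case), hence it contains each $P_\alpha(b(\omega))$ by choosing $x$ with the appropriate boundary pattern.
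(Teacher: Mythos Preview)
Your argument for the distribution of $b$ is correct and genuinely different from the paper's: you use the martingale property of $\varphi^i$ together with order-preservation to read off $\P(b^i<x)=\P(X^i_\infty(x)=1)=x$ directly, whereas the paper deduces the uniform law indirectly in Lemma~\ref{lem:synch} by computing the speed measure of the inverse diffusion, invoking an ergodic-type theorem for one-dimensional diffusions, and then identifying the resulting weak attractor with $b$. Your route is shorter and avoids the boundary classification altogether; the paper's route, on the other hand, simultaneously produces the statement that $\{b\}$ is the global pullback attractor of $\bar\varphi$ on $]0,1[^d$, which it then reuses. For the pullback and weak point attractor your approach via $\bar\varphi(t,\bar\vartheta_{-t}\omega)=\varphi(t,\omega)^{-1}$ and the squeeze coming from Lemma~\ref{lem:b} is essentially the paper's argument compressed.

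The forward part, however, has a real gap. You correctly isolate the difficulty, but the heuristic ``restoring drift towards $1/2$ plus recurrence forces separation to zero'' does not go through as stated: in the $Y$-coordinates the drift $y(1-y)(1-2y)$ has derivative $1-6y+6y^2$, which is \emph{positive} near $0$ and near $1$, so two ordered trajectories can separate there and no monotone contraction is available. The paper resolves this by passing to the logit variable $Z=-\log\bigl(Y/(1-Y)\bigr)$, for which the SDE becomes $\dd Z=\tilde b(Z)\,\dd t+\dd W$ with additive noise and $\tilde b'(z)<0$ everywhere; then $t\mapsto \psi(t,\omega)z_2-\psi(t,\omega)z_1$ is strictly decreasing, and combining this monotone contraction with the pullback-synchronization of Lemma~\ref{lem:synch} yields the almost sure forward convergence recorded in Remark~\ref{rem:asConv}. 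Without either this transformation or an explicit negative-Lyapunov-exponent argument, the forward claim is not established.
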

Note that $A$ consists of  exactly one point per face of $[0,1]^d$ and each such point $P_{\alpha}(b)$ is uniformly distributed on its corresponding face $\Gamma_{\alpha}$. Any `larger' attractor of $\bar\varphi$ on $[0,1]^d$, however, is given by the complete space itself, as we see in the following theorem.

\begin{thm}\label{thm:att_RDS1_inv}
 The \emph{minimal weak cc-attractor} of $\bar\varphi$ on $[0,1]^d$ is given by $[0,1]^d$ itself. 
\end{thm}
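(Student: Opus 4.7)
The plan is to show that any weak cc-attractor $A$ for $\bar\varphi$ must satisfy $A(\omega)=[0,1]^d$ almost surely. First I would translate the attractor condition via duality: using $\bar\varphi(t,\omega)=\varphi(t,\vartheta_{-t}\omega)^{-1}$, $\bar\vartheta_t=\vartheta_{-t}$ and the $\vartheta$-invariance of $\P$, the weak cc-attractor property $\dd(\bar\varphi(t,\omega)C,A(\bar\vartheta_t\omega))\to 0$ in probability is equivalent, after the substitution $\omega\mapsto\vartheta_t\omega$, to
\[
\sup_{c\in C}\dd\bigl(\varphi(t,\omega)^{-1}(c),\,A(\omega)\bigr)\xrightarrow[t\to\infty]{}0\quad\text{in probability}
\]
for every countable compact $C\subset[0,1]^d$. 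The strategy is then: for each deterministic $y_0\in(0,1)^d$, produce a deterministic countable compact $C$ and a measurable sequence $c_t(\omega)\in C$ with $\varphi(t,\omega)^{-1}(c_t(\omega))\to y_0$ almost surely. Combined with the display above (passing to a subsequence along which convergence in probability upgrades to almost sure) and the continuity of $\dd(\cdot,A(\omega))$, this forces $y_0\in A(\omega)$ a.s. Intersecting this event over a countable dense subset of $(0,1)^d$ and invoking the closedness of $A(\omega)$ then yields $A(\omega)=[0,1]^d$ a.s.

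For the construction I would take $C:=C_1\times\cdots\times C_d$ with each $C_i:=\{0,1\}\cup\{1/n:n\geq 1\}\cup\{1-1/n:n\geq 1\}$, a countable compact subset of $[0,1]$. Fix $y_0\in(0,1)^d$, set $Y_t^i:=\varphi^i(t,\omega)\,y_0^i$, and let $c_t^i(\omega)$ be a measurable nearest-point selection from $C_i$ to $Y_t^i$. By \eqref{eq:SDEEx1} the coordinates $Y^i$ are independent $[0,1]$-valued continuous martingales, each converging a.s.\ to $\{0,1\}$; moreover the mesh of $C_i$ near $\{0,1\}$ is quadratic, so $|c_t^i-Y_t^i|\lesssim(Y_t^i\wedge(1-Y_t^i))^2$. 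The first-variation $D_t^i:=\partial_{y_0^i}\varphi^i(t,\omega)\,y_0^i$ solves $dD^i=(1-2X^i)D^i\,dW^i$, while $X^i=Y^i$ satisfies $d\log Y^i=(1-Y^i)\,dW^i-\tfrac12(1-Y^i)^2\,dt$; hence both $\log D_t^i$ and $\log(Y_t^i\wedge(1-Y_t^i))$ have the form $-t/2+M_t+O(1)$ with $M_t$ a martingale of quadratic variation $O(t)$, so that $(Y_t^i\wedge(1-Y_t^i))^2/D_t^i\to 0$ a.s. The mean-value theorem applied to the $C^1$ homeomorphism $\varphi^i(t,\omega)$ on a neighborhood of $y_0^i$ then gives $|\varphi^i(t,\omega)^{-1}(c_t^i)-y_0^i|\lesssim|c_t^i-Y_t^i|/D_t^i\to 0$ a.s., whence $\varphi(t,\omega)^{-1}(c_t)\to y_0$ a.s., as required.

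The hard part will be making the quantitative comparison $(Y_t^i\wedge(1-Y_t^i))^2\ll D_t^i$ rigorous: it expresses the intuition that $\varphi^i(t,\omega)$ contracts at roughly the same exponential rate $e^{-t/2}$ at which the martingale $Y^i$ approaches $\{0,1\}$, so that the $1/n^2$-mesh of $C_i$ near the boundary just suffices to beat the exponential expansion of the inverse flow at $Y_t^i$. Once this estimate is in place the remaining ingredients — measurable selection of $c_t(\omega)$, the closedness of $A(\omega)$, and the dense-countable-intersection argument — are all routine.
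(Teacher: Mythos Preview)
Your overall strategy — translating the weak cc-attractor condition for $\bar\varphi$ into the statement $\dd\bigl(\varphi(t,\omega)^{-1}C,A(\omega)\bigr)\to 0$ in probability, then showing $y_0\in A(\omega)$ a.s.\ for each $y_0$ by finding $c_t(\omega)\in C$ with $\varphi(t,\omega)^{-1}c_t(\omega)\to y_0$ — is correct and genuinely different from the paper's. The paper instead works with $\bar\varphi$ directly: after the change of variable $Z=f(Y)=-\log\bigl(Y/(1-Y)\bigr)$ the inverse flow becomes an additive-noise SDE on $\R$ whose drift has strictly negative derivative, so gaps $\psi(t,\cdot)z_{i+1}-\psi(t,\cdot)z_i$ are \emph{monotonically} decreasing; together with the synchronisation lemma this makes the image of a single cc-set $B=\{0,1\}\cup\{f^{-1}(z_i)\}$ under $\bar\varphi(t,\omega)$ asymptotically dense in $[0,1]$, hence any cc-attractor is all of $[0,1]^d$. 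No derivative estimates are needed.

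Your argument, by contrast, does rest on a derivative estimate, and here there is a genuine gap — not where you flagged it, but in the step you called routine. The comparison $(Y_t\wedge(1-Y_t))^2/D_t\to 0$ is in fact correct (indeed $2\log Y_t-\log D_t=W_t-\tfrac12\int_0^t(1-2X_s^2)\,ds\to -\infty$ on $\{Y_\infty=0\}$). The problem is the mean-value step: the MVT gives $|\varphi^{-1}(c_t)-y_0|=|c_t-Y_t|/\varphi'(\xi_t)$ with $\xi_t$ between $y_0$ and $\varphi^{-1}(c_t)$, and the latter point is \emph{not} known a priori to lie in any fixed neighbourhood of $y_0$. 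Replacing $\varphi'(\xi_t)$ by $D_t=\varphi'(y_0)$ therefore requires a uniform comparison $\inf_{\xi}\varphi'(\xi)\gtrsim D_t$ over an interval that could in principle stretch toward $b^1(\omega)$, and this is not free. One can repair this — for instance, choose $c_t$ to be the largest point of $C_i$ below $Y_t$, so that $\xi_t\in[0,y_0]$, and then use $\int_0^t\varphi_s(\xi)\,dW_s=-\log(1-\varphi_t(\xi))+\log(1-\xi)-\tfrac12\int_0^t\varphi_s(\xi)^2\,ds$ to get a bound on $\log\varphi'(\xi)-\log D_t$ that is uniform in $\xi\in[0,y_0]$ and $t\ge 0$. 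But this extra work is precisely the kind of control that the paper's logarithmic transformation and monotone-contraction argument deliver for free, which is what makes that route cleaner.
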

This allows us to conclude that $[0,1]^d$ is thus also the \emph{(minimal) $\Delta$-attractor} for any $\Delta \in [0,d]$ (in the pullback, forward and weak sense). 

The following observations will be helpful in the proofs of the theorems above.

\begin{lemma}\label{lem:b}
 The (random) point $b$ given in \eqref{eq:defb} repells sets in $[0,1]^d$ in the sense that
 \begin{align*}
  \P\left(\forall\, \delta>0:\; \lim_{t\rightarrow\infty}d\left(\varphi(t, \cdot)B_{\delta}(b)^c,H^{d-1}\right) = 0 \right) = 1.
 \end{align*}
\end{lemma}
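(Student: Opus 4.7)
The plan is to decouple the claim into $d$ one-dimensional statements and then exploit monotonicity of each coordinate flow.

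First I would note that equation \eqref{eq:SDEEx1} decouples, so $\varphi^i(t,\omega)x^i$ depends only on the $i$-th coordinate $x^i$ and on $W^i$. The one-dimensional flow $\varphi^i(t,\cdot):[0,1]\to[0,1]$ is $\P$-a.s.\ continuous and strictly monotone (by pathwise uniqueness and the comparison theorem for the 1D SDE), and $t\mapsto\varphi^i(t,\omega)x^i$ is a bounded martingale converging $\P$-a.s.\ to a $\{0,1\}$-valued limit. Monotonicity forces $\{y\in[0,1]:\varphi^i(t,\omega)y\to 0\}$ to be a downward-closed subinterval of $[0,1]$, so that $\P$-a.s.
\[
\varphi^i(t,\omega)y\to 0 \ \text{ for every }\ y<b^i(\omega),\qquad \varphi^i(t,\omega)y\to 1 \ \text{ for every }\ y>b^i(\omega).
\]

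Next, for each $n\in\N$ and each $i\in\{1,\ldots,d\}$ set
\[
\eta^i_n(t,\omega):=\sup_{\substack{y\in[0,1]\\ |y-b^i(\omega)|\ge 1/n}}\dd\!\bigl(\varphi^i(t,\omega)y,\{0,1\}\bigr).
\]
By monotonicity of $\varphi^i(t,\omega)$ the supremum over $y\le b^i-1/n$ is attained at $(b^i-1/n)\vee 0$, and the supremum over $y\ge b^i+1/n$ at $(b^i+1/n)\wedge 1$ (cases that are empty being vacuous). Since both of these endpoints sit strictly on the correct side of the threshold $b^i(\omega)$, the former tends $\P$-a.s.\ to $0$ and the latter to $1$, so $\eta^i_n(t,\cdot)\to 0$ as $t\to\infty$, $\P$-a.s.

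Finally, let $\omega$ lie in the full-measure event on which the previous convergence holds for every $i$ and every $n$, and take any $\delta>0$ and any $x\in B_\delta(b(\omega))^c$. Pick $n$ with $1/n\le\delta$; some coordinate $i$ must satisfy $|x^i-b^i(\omega)|\ge 1/n$, so $\varphi^i(t,\omega)x^i$ lies within $\eta^i_n(t,\omega)$ of $\{0,1\}$. Hence $\varphi(t,\omega)x$ lies within sup-distance $\eta^i_n(t,\omega)$ of one of the two $(d-1)$-dimensional faces $\{z\in[0,1]^d:z^i=0\}$ or $\{z\in[0,1]^d:z^i=1\}$, both of which are contained in $H^{d-1}$. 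Taking the supremum over $x\in B_\delta(b(\omega))^c$ and the maximum over $i$ gives
\[
\dd\bigl(\varphi(t,\omega)B_\delta(b(\omega))^c,H^{d-1}\bigr)\le\max_{1\le i\le d}\eta^i_n(t,\omega)\longrightarrow 0\quad\text{as }t\to\infty,
\]
which yields the claim simultaneously for all $\delta>0$. The only mildly delicate step is the monotonicity-to-uniform-convergence argument in the second paragraph; once that is in place, everything else is routine bookkeeping.
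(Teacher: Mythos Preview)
Your proof is correct and follows essentially the same route as the paper's: both reduce the Hausdorff semi-distance to $H^{d-1}$ to a coordinate-wise bound of the form $\max_i\{\varphi^i(t,\cdot)(b^i-\delta),\,1-\varphi^i(t,\cdot)(b^i+\delta)\}$ via monotonicity of the one-dimensional flows, and then invoke the defining property of $b^i$. Your version is simply more explicit about the monotonicity argument and about passing from ``for each $\delta$, a.s.'' to ``a.s., for all $\delta$'' via the countable family $\delta=1/n$, which the paper leaves implicit.
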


\begin{proof}
It suffices to make the following simple observation:
\begin{align*}
 d\left(\varphi(t, \cdot)B_{\delta}(b)^c,H^{d-1}\right) 	& \leq \max_{i = 1, \ldots, d}\left\{\varphi^i(t, \cdot)(b^i(\cdot)-\delta), 1-\varphi^i(t, \cdot)(b^i(\cdot)+\delta)\right\}\\
								& \longrightarrow 0, \qquad \text{for all $\delta>0\;\, \P$-a.s. by construction of $b$.}
\end{align*}
\end{proof}

Theorem \ref{thm:point_RDS1_inv} is a direct consequence of the following partially stronger observation, that considers attractors of $\bar \varphi$ as an RDS \emph{on $]0,1[^d$}.

\begin{lemma}\label{lem:synch}
 The (random) point $b$ defined in equation \eqref{eq:defb} is the \emph{global pullback attractor} of $\bar \varphi$ as an RDS on $]0,1[^d$. Furthermore $b$ is uniformly distributed on $]0,1[^d$.
\end{lemma}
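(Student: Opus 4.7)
The plan is to reduce everything to the one-dimensional case coordinate-by-coordinate, since the driving noises $W^1,\dots,W^d$ are independent and the SDE \eqref{eq:SDEEx1} decouples. First I would observe that for any fixed $x^i\in[0,1]$ the process $t\mapsto\varphi^i(t,\omega)x^i$ is a bounded martingale, so it converges $\P$-almost surely to a limit in $\{0,1\}$. Standard comparison for one-dimensional SDEs with Lipschitz coefficients (combined with the fact that the flow is a diffeomorphism on $]0,1[$) shows that $x^i\mapsto\varphi^i(t,\omega)x^i$ is strictly increasing for every $t\ge 0$ and $\P$-a.a.\ $\omega$. Consequently $M^i_0(\omega)$ must be an interval of the form $[0,b^i(\omega))$ or $[0,b^i(\omega)]$, with threshold precisely $b^i(\omega)=\sup\{x^i:x\in M^i_0(\omega)\}$, and the restriction to countably many rational test points handles joint measurability in $\omega$.

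Next I would identify the distribution of $b$. For $c\in(0,1)$ monotonicity gives $\{b^i(\omega)<c\}=\{c\in M^i_1(\omega)\}$ up to a $\P$-null set, and since $\varphi^i(\cdot,\omega)c$ is a bounded martingale started at $c$ whose limit is $\{0,1\}$-valued, optional stopping yields
\[
\P(b^i<c)=\P\bigl(\lim_{t\to\infty}\varphi^i(t,\cdot)c=1\bigr)=c.
\]
Hence $b^i\sim\mathrm{Unif}([0,1])$, and independence of $W^1,\dots,W^d$ transfers to independence of $b^1,\dots,b^d$, giving $b\sim\mathrm{Unif}(]0,1[^d)$.

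For the pullback attractor claim, note that $\bar\vartheta_{-t}=\vartheta_t$ and $\bar\varphi(t,\omega)=\varphi(t,\vartheta_{-t}\omega)^{-1}$, so the pullback of $x$ at $\omega$ is simply $\varphi(t,\omega)^{-1}x$. Given a compact $K\subset\,]0,1[^d$ there exist $0<\alpha<\beta<1$ with $K\subset[\alpha,\beta]^d$. By strict monotonicity, $\varphi^i(t,\omega)^{-1}([\alpha,\beta])=[\varphi^i(t,\omega)^{-1}(\alpha),\varphi^i(t,\omega)^{-1}(\beta)]$. For any $\varepsilon>0$, if $y<b^i(\omega)-\varepsilon$ then $\varphi^i(t,\omega)y\to 0<\alpha$, so $\varphi^i(t,\omega)^{-1}(\alpha)\ge y$ eventually; an analogous squeeze using $\beta<1$ shows both endpoints converge to $b^i(\omega)$. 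Hence $\dd(\varphi(t,\omega)^{-1}K,b(\omega))\to 0$ $\P$-a.s., which is the global pullback attraction we want.

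Strict $\bar\varphi$-invariance of $\omega\mapsto\{b(\omega)\}$ follows from the cocycle identity: since $\varphi^i(t+s,\vartheta_{-t}\omega)y=\varphi^i(s,\omega)\varphi^i(t,\vartheta_{-t}\omega)y$ tends to $0$ or $1$ according as $y<b^i(\vartheta_{-t}\omega)$ or $y>b^i(\vartheta_{-t}\omega)$, monotonicity forces $\varphi^i(t,\vartheta_{-t}\omega)(b^i(\vartheta_{-t}\omega))=b^i(\omega)$, equivalently $\bar\varphi(t,\omega)b(\omega)=b(\bar\vartheta_t\omega)$. The main delicacy I expect is arranging the exceptional $\P$-null sets uniformly in $x^i$: pointwise martingale convergence gives a null set depending on $x^i$, and monotonicity is what allows the reduction to countably many rationals and hence to a single $\P$-null exceptional set -- this is the only place where the otherwise soft argument requires care.
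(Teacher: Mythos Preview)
Your argument is correct and, for the pullback-attraction part, essentially mirrors the paper's: both identify $\bar\varphi(t,\bar\vartheta_{-t}\omega)=\varphi(t,\omega)^{-1}$ and then squeeze $\varphi(t,\omega)^{-1}[\alpha,\beta]$ down to $b(\omega)$ using that points below $b^i-\delta$ are driven to $0$ and points above $b^i+\delta$ to $1$ (this is precisely the content of the paper's Lemma~\ref{lem:b}).

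Where you genuinely diverge from the paper is in establishing the uniform law of $b$. The paper takes an indirect route: it computes the scale function and speed measure of the one-dimensional inverse diffusion, invokes a result of Mandl to obtain weak convergence to the uniform law, and then appeals to a synchronization theorem for monotone RDS (Chueshov--Scheutzow) to produce a single-point weak attractor $\{a\}$ with $a$ uniform; only afterwards is $a$ identified with $b$ via uniqueness of the weak attractor. Your argument is more direct and fully self-contained: the forward coordinate $\varphi^i(\cdot,\omega)c$ is a bounded martingale with $\{0,1\}$-valued limit, so optional stopping gives $\P(\lim_t\varphi^i(t,\cdot)c=1)=c$, and monotonicity converts this into $\P(b^i\le c)=c$. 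This avoids both the boundary classification and the external synchronization result, at the price of not exhibiting the uniform law as the invariant measure of $\bar\varphi$ (which the paper's approach delivers as a by-product). Your remark about handling the $x$-dependent null sets via rationals and monotonicity is exactly the right way to make the threshold $b^i$ well-defined simultaneously for all $x$ on a single full-measure set.
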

The phenomenon of a global attractor consisting of a single point is often referred to as \emph{synchronization}, see for example  \cite{SbN}. 
\begin{proof} The proof consists of two parts. First we prove the existence of a global weak attractor consisting of a single point that is uniformly distributed on $]0,1[^d$. Then, using Lemma \ref{lem:b}, we prove that $b$ is indeed the  global pullback attractor and must therefore coincide with the global weak attractor.

 The solution to \eqref{eq:SDEEx1_inv} clearly consists of $d$ independent copies of a solution to the one-dimensional SDE
  \begin{align*}
   \dd Y^1(t) = Y^1(t)(1-Y^1(t))(1-2Y^1(t))\dd t - Y^1(t)(1-Y^1(t))\dd W^1(t),
  \end{align*}
  whence we analyse the properties of this diffusion. Since its scale function $p$ and normalized speed measure $m$ are given by
  \begin{align*}
   p(y) = \frac{1}{16}\left(2\ln\left(\frac{y}{1-y}\right)-\frac{1-2y}{y(1-y)}\right) \quad \text{ and } \quad \dd m(y)=1\dd y
  \end{align*}
 respectively, we see that according to Feller's boundary classification (cf. \cite{EK}, Section 8.1) the boundaries 0 and 1 are both natural and thus inaccessible. In particular, when started in $]0,1[$, the diffusion cannot reach $\{0,1\}$ in finite time ($\P$-a.s.) justifying the restriction of $\bar\varphi$ to $]0,1[^d$. Theorem 7 in \cite{Mandl_book} implies the weak convergence of the transition probabilities of $Y^1$ to $m$, the uniform distribution on $]0,1[$. Since all its coordinates are independent, we immediately obtain the weak convergence of the transition probabilities of $Y=(Y^1, \ldots, Y^d)$ to $m^{\otimes d}$, i.e.\ the uniform distribution on $]0,1[^d$.
 Furthermore, as $\bar\varphi$ is order-preserving  Theorem 1 in \cite{ChSch04} allows us to conclude the existence of a random point $a \in ]0,1[^d$ uniformly distributed on $]0,1[^d$ such that $\{a\}$ is the weak global attractor of $\bar\varphi$ on $]0,1[^d$. 
 
 Now let us turn our attention to $b$. Recall that $\bar\varphi(t, \omega) = (\varphi(t,\vartheta_{-t}\omega))^{-1}$ for any  $\omega \in \Omega$ and $t \geq 0$. It is easy to check, that $\{b\}$ is invariant under $\bar \varphi$. Let $B \subset ]0,1[^d$ be compact. Therefore there exists an $\varepsilon>0$ such that $B\subseteq [\varepsilon,1-\varepsilon]^d$. Lemma \ref{lem:b} then implies that $\P$-a.s.
 \begin{align*}
  \forall\, \delta>0\; \exists\, T\;\forall\, t\geq T:\; B\subseteq [\varepsilon,1-\varepsilon]^d\subseteq \varphi(t,\cdot)B_{\delta}(b(\cdot)).
 \end{align*}
 Since for any $\omega \in \Omega$
 \begin{align*}
  &\forall\, \delta>0\; \exists\, T\;\forall\, t\geq T:\; B\subseteq \varphi(t,\omega)B_{\delta}(b(\omega)) \\
  \Leftrightarrow &\forall\, \delta>0\; \exists\, T\;\forall\, t\geq T:\; \bar\varphi(t,\bar\vartheta_{-t}\omega)B\subseteq B_{\delta}(b(\omega))\\
  \Leftrightarrow &\lim_{t\rightarrow \infty} \dd(\bar\varphi(t,\bar\vartheta_{-t}\omega)B,\{b(\omega)\}) = 0
 \end{align*}
we have proven that $\{b\}$ is the  global pullback attractor. As such, it must coincide with the weak global attractor $\{a\}$ and is thus uniformly distributed on $]0,1[^d$.

\end{proof}
It should be remarked that the definition of a weak (global) attractor in \cite{ChSch04} differs from the one in the present work in that it requires the attraction of all \emph{bounded} sets in probability. However, the cited Theorem 1 does yield a sufficient result. 

We now turn to the proof of Theorem \ref{thm:att_RDS1_inv}.
\begin{proof}[Proof of Theorem \ref{thm:att_RDS1_inv}]
 We first prove the claim in the case of $d=1$ and for simplicity write $Y:=Y^1$. We will consider a transformation of $Y$ to obtain a process taking values on all of $\R$. Define $f: [0,1] \rightarrow \R, y \mapsto -\ln\left(\frac{y}{1-y}\right)$ and observe that
 \begin{align*}
  f^{-1}(z) = \frac{e^{-z}}{1+e^{-z}},\;\;z \in \R;\quad f'(y)=-\frac{1}{y(1-y)},\quad f''(y) = \frac{1-2y}{y^2(1-y)^2},\;\;y \in ]0,1[.
 \end{align*}
For $Z:=f\circ Y$ \eqref{eq:SDEEx1_inv} and  It\=o's Lemma yields
\begin{align}\label{eq:RDS1_inv_transform}
 \dd Z(t) = \frac{1}{2}\frac{1-e^{-Z(t)}}{1+e^{-Z(t)}} \dd t + \dd W(t).
\end{align}
We denote the corresponding RDS on $\R$ by $\psi$. Consider a sequence $(z_i)_{i\in \Z}$ of initial values such that $z_i < z_{i+1},\,i \in \Z$, 
$\lim_{\vert i\vert\rightarrow \infty} (z_{i+1}-z_{i})=0$ and $\lim_{\vert i\vert \rightarrow \infty} \vert z_i\vert = \infty$. Observe that for the drift $\tilde b(z):= (1-e^{-z})/(2+2e^{-z})$ we have $\tilde b'(z) < 0$ for all $z \in \R$, hence $t \mapsto \psi(t,\cdot)z_{i+1} - \psi(t,\cdot)z_i$ is decreasing for every $i \in \Z$, $\P$-almost surely. We may estimate
\begin{align*}
  \sup_{i \in \Z} \vert \varphi(t,\cdot)f^{-1}(z_{i+1}) -& \varphi(t,\cdot)f^{-1}(z_{i})	\vert  \leq \frac{1}{4} \sup_{i \in \Z} \vert \psi(t,\cdot)z_{i+1} - \psi(t,\cdot)z_{i}\vert  \\
												& \leq \frac{1}{4} \underbrace{ \sup_{\stackrel{i \in \Z}{ \vert i \vert\geq J}}\vert \psi(t,\cdot)z_{i+1} - \psi(t,\cdot)z_{i}\vert}_{=:A_J(t,\cdot)}  + \frac{1}{4} \underbrace{\sup_{\stackrel{i \in \Z}{ \vert i\vert<J}} \vert\psi(t,\cdot)z_{i+1} - \psi(t,\cdot)z_{i}\vert}_{=:B_J(t,\cdot)}
\end{align*}
for any $J \in \N$. By construction we have $\lim_{J\rightarrow \infty}\sup_{t \geq 0}B_J(t,\cdot)=0$. In addition, translating Lemma \ref{lem:synch} to $\psi$ and taking into consideration the monotonicity of $\psi$ observed before yields $A_J(t,\cdot) \rightarrow 0$, $t\rightarrow \infty$, $\P$-a.s.\ for every $J \in \N$. 

Together we therefore see
\begin{align*}
 \sup_{t\geq T}\sup_{i \in \Z} \vert \varphi(t,\cdot)f^{-1}(z_{i+1}) - \varphi(t,\cdot)f^{-1}(z_{i})  \vert \rightarrow 0, \quad t\rightarrow \infty \qquad \P\text{-a.s.}
\end{align*}
We are now ready to conclude the following: $B:=\{0,1\}\cup\{f^{-1}(z_i), i \in \Z\}\subseteq [0,1]$ is a cc-set such that for any choice of $0 \leq x <y\leq 1$ and $\P$-almost all $\omega \in \Omega$ there exists a $t_0=t_0(x,y,\omega)<\infty$ such that for all $t \geq t_0$ we have $\varphi(t,\omega)B  \cap [x,y] \neq \emptyset$. Hence the minimal weak cc-attractor must be equal to $[0,1]$.

For general $d\geq 1$ we now simply consider $B_d:=B^d$. Since the observations above still hold componentwise, we know that for any $0\leq x_i<y_i\leq1$, $i =1, \ldots, d$ there exists a $t_0=t_0(x,y,\omega)$ such that for all $t\geq t_0$ we have $\varphi(t,\cdot)\cap\Pi_{i=1}^d[x_i,y_i] \neq \emptyset$, whence the minimal weak cc-attractor must be $[0,1]^d$.
\end{proof}
\begin{rem}\label{rem:asConv}
 For later reference, let us note that Lemma \ref{lem:synch} together with the monotonicity of $\psi$ (defined by the SDE in \eqref{eq:RDS1_inv_transform}) even implies the $\P$-a.s.\ convergence of $\vert \psi(t,\cdot)z-\psi(t,\cdot)\bar z\vert $ to 0 for any $z, \bar z \in \R$, from which we can conclude the $\P$-a.s\ convergence of $\vert\varphi^1(t,\cdot)x-\varphi^1(t,\cdot)y\vert$ to 0 for any $x,y \in [0,1]$.
\end{rem}
\begin{proof}[Proof of Theorem \ref{thm:point_RDS1_inv}]
 Define $A:=\{b_{\alpha}\mid \alpha\in\{-1,0,1\}^d\}$ where $b_{\alpha}=P_{\alpha}(b)$. Since $\{b\}$ is an attractor of $\bar\varphi$ on $]0,1[^d$ the invariance of $A$ under $\bar\varphi$ follows recalling that $0$ and $1$ are invariant unter each $\bar \varphi^i$.
 
 Next, observe that Lemma \ref{lem:synch} in particular implies $\bar\varphi^i(t,\bar\vartheta_{-t}\omega)x \rightarrow b^i$, $t \rightarrow \infty$, for $\P$-almost all $\omega \in \Omega$ for each $i=1,\ldots,d$ and any $x \in ]0,1[$. Hence, for any $x \in \intt\Gamma_{\alpha}$ we see that $\bar\varphi^i(t,\bar\vartheta_{-t}\omega)x^i \rightarrow b^i$, $t\rightarrow \infty$, for $\P$-almost all $\omega \in \Omega$, when $\alpha_i=0$ and the invariance of $0$ and $1$ under any $\bar\varphi^i$ yields $\bar\varphi(t,\bar\vartheta_{-t}\omega)x \rightarrow b_{\alpha}$, $t\rightarrow \infty$, for $\P$-almost all $\omega \in \Omega$. Since this holds for any $\alpha \in \{-1,0,1\}^d$ and any point in $\{0,1\}^d$ is a fixed point of $\bar \varphi$ we have proven that $A$ is indeed the  minimal pullback point attractor as well as the 
minimal weak point attractor.
 
 From Remark \ref{rem:asConv} we then conclude that $A$ is also the minimal forward point attractor. 
\end{proof}

\begin{proof}[Proof of Theorem \ref{thm:delta_RDS1}]
Observe that each face $\Gamma_{\alpha}$, and thus also $H^m$ for any $m = 0,\ldots, d$, is strictly $\varphi$-invariant, since (for every $\omega \in \Omega$ and $t \in \R$) each coordinate of $x\mapsto \varphi(t,\omega)x$ is bijective on $[0,1]$.

Now choose $m \leq d-1$ and $\alpha \in \{-1,0,+1\}^d$ with $\Vert \alpha \Vert = d-(m+1)$. Let $B \subset [0,1]^d$ be a compact set with $\dim_H(B) \in [m,m+1[$.  Since $\dim_H(P_{\alpha}(B))<m+1$, given the uniform distribution of $b$ on $]0,1[^d$ we see that $\P(P_{\alpha}(b) \in P_{\alpha}(B))=0$. The compactness of $P_{\alpha}(B)$ then implies the almost sure existence of a $\delta>0$ such that  $P_{\alpha}(B)\subseteq B_{\delta}(P_{\alpha}(b))^c $. Therefore, for $\P$-almost all $\omega \in \Omega$ and every $y \in B$, there exists an $i \in \{1, \ldots, d\}$ such that $\alpha_i = 0$ and $y^i \nin ]b^i-\delta,b^i+\delta[$ and thus
 \begin{align*}
 \dd(\varphi^i(t,\omega)y^i, \{0,1\}) &  \leq \max_{i =1, \ldots,d}\left(\dd(\varphi^i(t,\omega)(b^i-\delta), \{0,1\})+\dd(\varphi^i(t,\omega)(b^i+\delta), \{0,1\})\right) \rightarrow 0
 \end{align*}
 by Lemma \ref{lem:b}. 
 Since the bound does not depend on $y\in B$ itself, we obtain
 \begin{align*}
  \dd(\varphi(t,\omega)B, \underbrace{\{x \in [0,1]^d\mid \exists\, i =1,\ldots,d: \alpha_i = 0 \text{ and } x_i \in \{0,1\}\}}_{=:G_{\alpha}}) \rightarrow 0
 \end{align*}
for $\P$-almost all $\omega \in \Omega$. Since these considerations hold true for any $\alpha \in \{-1,0,+1\}$ such that $\Vert \alpha \Vert = d-(m+1)$, we have indeed proven that
 \begin{align}\label{eq:G_alpha}
  \dd(\varphi(t,\omega)B, \bigcap_{\alpha: \Vert \alpha \Vert = d - (m+1)} G_{\alpha}) \rightarrow 0
 \end{align}
for $\P$-almost all $\omega \in \Omega$. A moment of thought reveals that 
\begin{align*}
 \bigcap_{\alpha: \Vert \alpha \Vert = d - (m+1)} G_{\alpha} \;= \bigcup_{\alpha: \Vert \alpha \Vert = d - m} \Gamma_{\alpha} \;=\; H^m
\end{align*}
whence \eqref{eq:G_alpha}  proves that $H^m$ is indeed a forward (and a weak) attractor for all compact sets of Hausdorff dimension strictly less than $m+1$. 

Since $\dim_H(H^m) = m$ (and $H^m$ is compact) the strict $\varphi$-invariance of the faces implies that the \emph{minimal} forward or weak $\Delta$-attractor for $\Delta <m+1$, must in particular contain $H^m$, which completes the proof. 
 \end{proof}
 

\section{The second example} \label{sec:Ex2}

Our second example arises naturally in the context of population genetics as a RDS generated by so-called \emph{Volterra polynomial stochastic operators}. The special case of Volterra quadratic 
stochastic operators has been treated in \cite{MyVolterra} while the case of general $d$ is contained in \cite{MyDiss}. 

\subsection{Volterra Polynomial Stochastic Operators}
Quadratic stochastic operators (QSOs) were introduced by Bernstein in \cite{Bernstein} to describe the evolution in discrete generations  of a large population with a given law of heredity. Their theory has since then developed motivated both by their natural and frequent occurrence in mathematical models of genetics as well as their interesting behaviour as mathematical objects in their own right. See \cite{RandomQSOs} for a comprehensive account of QSOs. In some applications, also \emph{cubic} stochastic operators have been considered (cf.\ \cite{Cubic3, Cubic2}, for example), but it is natural for a mathematician to generalize this notion to a \emph{polynomial} stochastic operator (PSO). 

The purpose of PSOs in their biological origin is the description of the evolution of gene-frequencies of a population according to certain hereditarian laws, hence they are often referred to as \emph{evolutionary operators}. We assume that reproduction takes place in non-overlapping (i.e.\@ discrete) generations and each individual has $d \geq 2$ parents from the previous generation. There are $m \in \N$ possible \emph{types} of individuals in the population and each individual belongs to exactly one of these types. (In particular, there is no mutation, i.e.\@ no new types can appear.) Furthermore, the size of the population is taken to be sufficiently large for random fluctuations to have no impact on the frequencies of the types under consideration. The population is in a state of \emph{panmixia}, i.e.\@ the types are assumed statistically independent for breeding, implying the absence of selection and sexual differentiation between the \lq parents'. 
We will call $\mm:= \{1, \ldots, m\}$ the \emph{type-space}. Since we trace the evolution of gene-frequencies, our state space is
\begin{align*}
 S^{m-1}:=\left\{x=(x_1, \ldots,x_m) \in [0,1]^m \mid \sum_{i=1}^m x_i = 1\right\}
\end{align*}
the simplex of probability distributions on $\mm$. We will use $\intt S^{m-1}:=\{x \in S^{m-1}\mid \forall i =1, \ldots, m:\; x_i>0\}$ for the interior as well as $\partial S^{m-1}:=S^{m-1}\setminus \intt S^{m-1}$ for the boundary of $S^{m-1}$ and denote its vertices by $e_1, \ldots, e_m$. 

\begin{defn}\label{def:pso}
For $d \in \N$ let $V:  S^{m-1} \rightarrow S^{m-1}$ be such that for any $x \in S^{m-1}$ and $ k \in \mm$ 
\begin{align}\label{eq:pso}
 (Vx)_k := \sum_{i_d = 1}^{m} \cdots \sum_{i_1 = 1}^{m} p^{i_1,\ldots,i_d}_kx_{i_1}\cdots x_{i_d}
\end{align}
where the coefficients $(p^{i_1,\ldots,i_d}_k)_{i_1,\ldots,i_d,k \in \mm}$ satisfy
\begin{itemize}
 \item[(a)] for any permutation $\pi$ of $\{1, \ldots, d\}$:\, $p^{i_1,\ldots,i_d}_k = p^{i_{\pi(1)},\ldots,i_{\pi(d)}}_k \geq 0$ and
 \item[(b)] $\sum_{k = 1}^{m} p^{i_1,\ldots,i_d}_k = 1$
\end{itemize}
for all $i_1,\ldots,i_d,k \in \mm$. Then $V$ is called a \emph{polynomial stochastic operator (PSO) of degree $d$}. For $d=2$ and $d=3$ we say \emph{quadratic}, respectively  \emph{cubic}, stochastic operator.

If, in addition, for all $i_1,\ldots,i_d,k \in \mm$,
\begin{align}\label{eq:psoVolterra}
 k \nin \{i_1, \ldots, i_d\} \Rightarrow p^{i_1,\ldots,i_d}_k = 0
\end{align}
 then said operator is called a \emph{Volterra} polynomial stochastic operator (VPSO).
\end{defn}
Note that the terms and definitions above have clear biological interpretations:  For $i_1,\ldots,i_d, k \in \mm$ the quantities $p^{i_1,\ldots,i_d}_k$, called \emph{heredity coefficients}, give the conditional probabilities, that $d$ individuals of respective types $i_1, \ldots, i_d$ interbreed to produce an individual of type $k$, given that they meet. Condition (a) is the lack of differentiation between the parents (\lq mother', \lq father', \lq other'...) and (b) comes from the exclusion of mutation. Likewise, the product-structure in \eqref{eq:pso} corresponds to the assumption of statistical independence of types for breeding. The property \eqref{eq:psoVolterra} gives the biologically sensible condition that the offspring must be of the type of one of its parents.

It is a straightforward observation, that PSOs are \emph{continuous}. For the case of $d=2$ it was shown in \cite{VolterraTournaments}, that any Volterra \emph{quadratic} stochastic operator is a homeomorphism.

A Volterra PSO of degree $d$ has the general form 
  \begin{align*}
   (Vx)_k & = \,x_k\,\, \Big[ \;p^{k,\ldots,k}_kx_k^{d-1} + \sum_{l=1}^{d-1} x_k^{d-l-1} \binom{d}{d-l}\sum_{\substack{i_{l}=1\\ i_{l}\neq k}}^{m} \cdots \sum_{\substack{i_1=1\\ i_1\neq k}}^{m} p^{i_1,\ldots,i_{l},k}_kx_{i_1}\cdots x_{i_{l}}\Big]
  \end{align*}
  for any $x \in S^{m-1}$ and $k \in \mm$. In particular $(Vx)=0$ whenever $x_k=0$, hence $e_1, \ldots, e_m$ are fixed points. Likewise, any face $\Gamma_{\alpha}:=\{ x \in S^{m-1} \mid x_i \leq \alpha_i\}$ (for any $\alpha \in \{0,1\}^m$) remains invariant in the sense that $V(\Gamma_{\alpha}) = \Gamma_{\alpha}$.

For our further considerations we need to introduce a subclass of VPSOs, that correspond to hereditary mechanisms that impose stricter conditions on the reproduction of specific types. 

\begin{defn}\label{def:purebred}
In the hereditary mechanisms described by a PSO $V$ of degree $d$ with heredity coefficients $(p^{i_1, \ldots, i_d}_k)_{i_1, \ldots, i_d,k \in \mm}$ we will call a type $k \in \mm$ \emph{purebred} if
 \begin{align}\label{eq:conditionPureBredk}
   \vert\left\{j \in \{1, \ldots, d\} \mid i_j \neq k\right\}\vert \geq d-2\;\Rightarrow\; p^{i_1, \ldots, i_d}_k = 0
 \end{align}
for all $i_1, \ldots, i_d,k \in \mm$. 
\end{defn}
In biological terms, a purebred is then a type that can only occur if at least \emph{two} of its parents are of this type. For QSOs this means that \emph{all} parents need to be of the same type, which explains the terminology. If all types are purebred in a PSO, then it is also a Volterra PSO and a moment's thought reveals that for every type $k \in \mm$ we can easily find a Volterra PSO $V$ such that $k$ is purebread in $V$.

It is quite obvious from a biological point of view that being \emph{purebred} is a disadvantage when it comes to survival of the type. This is also clearly seen in the mathematical expressions, as simple calculations reveal the following estimates for our two types of PSOs:

\begin{prop}\label{prop:estimatePBV}
 Let $V: S^{m-1} \rightarrow S^{m-1}$ be a PSO of degree $d\geq 2$. If the type $k \in \mm$ is purebred, then for all $x \in S^{m-1}$
 \begin{align*}
  (Vx)_k \leq x_k^2\binom{d}{2}.
 \end{align*}
 If $V$ is a Volterra PSO, then for every type $k \in \mm$ and any initial distribution $x \in S^{m-1}$:
 \begin{align*}
  (Vx)_k \leq dx_k.
 \end{align*}
\end{prop}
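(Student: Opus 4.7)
The plan is to read \eqref{eq:pso} probabilistically: let $X_1, \dots, X_d$ be i.i.d.\ $\mm$-valued random variables with $\P(X_j = i) = x_i$, and set $N := \#\{j \in \{1, \dots, d\} : X_j = k\}$, so that $N \sim \Bin(d, x_k)$. Each coefficient satisfies $0 \leq p^{i_1,\dots,i_d}_k \leq 1$ (by Definition~\ref{def:pso}(a)--(b)), and formula \eqref{eq:pso} rewrites as
\[
(Vx)_k \;=\; \E\!\left[p^{X_1, \dots, X_d}_k\right].
\]
Both inequalities then reduce to controlling the probability that $N$ reaches a threshold determined by where the coefficients are allowed to be non-zero.

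For the Volterra case, condition \eqref{eq:psoVolterra} forces the integrand to vanish on $\{N = 0\}$, so
\[
(Vx)_k \;\leq\; \P(N \geq 1) \;=\; 1 - (1 - x_k)^d \;\leq\; d\, x_k,
\]
the last step being Bernoulli's inequality.

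For the purebred case, condition \eqref{eq:conditionPureBredk} analogously forces the integrand to vanish unless $N \geq 2$, so $(Vx)_k \leq \P(N \geq 2)$. The factor $\binom{d}{2} x_k^2$ then drops out of the pointwise bound $\1{N \geq 2} \leq \binom{N}{2}$ combined with the factorial-moment identity $\E\bigl[\binom{N}{2}\bigr] = \binom{d}{2} x_k^2$ for a binomial variable.

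There is no real obstacle: the argument is combinatorial bookkeeping plus one binomial identity. An equivalent route avoiding the probabilistic formalism would expand \eqref{eq:pso} according to the number $\ell$ of indices equal to $k$, collect each class via (a) into $\binom{d}{\ell} x_k^{\ell} (1 - x_k)^{d-\ell}$ times a sum of coefficients bounded by $1$, and finally verify $1 - (1 - x_k)^d - d\, x_k (1 - x_k)^{d-1} \leq \binom{d}{2} x_k^2$ by differentiating in $x_k \in [0, 1]$ (its derivative equals $d(d-1) x_k (1 - x_k)^{d-2}$, dominated on $[0,1]$ by $2\binom{d}{2} x_k$).
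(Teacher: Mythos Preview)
Your argument is correct. The paper does not actually supply a proof of this proposition: it only remarks that the bounds follow from ``simple calculations'' and, just before the statement, displays the general expansion of $(Vx)_k$ for a Volterra PSO grouped by the number of indices equal to $k$. That expansion is precisely the ``alternative route'' you sketch at the end, so your probabilistic packaging via $N\sim\Bin(d,x_k)$ is the same combinatorics in cleaner language; the factorial-moment step $\P(N\geq 2)\leq \E\bigl[\binom{N}{2}\bigr]=\binom{d}{2}x_k^{2}$ is a tidy way to close the purebred estimate without the calculus detour.
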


The second estimate was observed for VQSOs in \cite{MyVolterra}. This proposition is a crucial observation for this section as it gives uniform bounds on compact subsets of $\intt S^{m-1}$ describing that such sets will receive a strong 'push' in one direction by a VPSO where a type is purebred and that this effect is not easily countered by any other VPSO.

\subsection{Evolution through VPSOs}
 
 Let $\mathcal V$ be the set of all Volterra PSOs of some fixed degree $d\geq 2$. Traditionally, (V)PSOs and their properties have been analyzed in a deterministic context. However, it seems rather natural to randomize the trajectories. We follow the notions and set-up used in \cite{MyVolterra, RandomQSOs}. 
 
 In order to obtain the RDS, we introduce more structure on this set. The correspondence between the PSOs $V$ and \lq matrices'  $(p^{i_1, \ldots, i_d}_k)_{i_1, \ldots, i_d, k \in \mm}$ gives a natural embedding $\iota: \mathcal V \hookrightarrow \R^{\left(m^{d+1}\right)}$ allowing us to define $\Upsilon:=\iota^{-1}\left(\left\{B\cap\iota(\mathcal V)\mid B \in \mathcal B(\R^{\left(m^{d+1}\right)})\right\}\right)$ as $\sigma$-algebra on $\mathcal V$. For any type $k \in \mm$, let $\mathcal V_k$ be the set of all Volterra PSOs $V$ (of degree $d$) such that the type $k$ is purebred in $ V$. 
These sets are all non-empty and clearly measurable with respect to $\Upsilon$. Let $\nu$ be a measure on $(\mathcal V,\Upsilon)$ such that $\nu_k:=\nu(\mathcal V_k)>0$ for all types $k \in \mm$.

We are now ready to define the RDS to be considered in this section.
\begin{defn}\label{def:ourRDS1}
 Let $d\geq 2$. Let the probability space be given by $ \Omega := \mathcal V^{\Z}$, $\mathcal F:=\Upsilon^{\otimes\Z}$, $\P:=\nu^{\otimes\Z}$. We then define $\vartheta: \Omega \rightarrow \Omega$ by $(\vartheta\omega)_i:=\omega_{i+1}$ for all $\omega \in \Omega$, $i \in \Z$ and with it the family $\vartheta_z:=\vartheta^z$, $z \in \Z$ which completes the MDS. Consider $S^{m-1}$ with the metric $\dd$ induced by the euclidian norm. We define the RDS $\varphi: \N_0 \times \Omega\times S^{m-1}  \rightarrow S^{m-1}$
 as
 \begin{align}\label{eq:defRDSd}
  \varphi(n,\omega)x := \begin{cases}
			    \omega_n \circ \cdots \circ \omega_1 x,	& \text{ for } n \in \N,\\
			    x, 		& \text{ for } n = 0.
                           \end{cases}
 \end{align}
In the case of $d=2$ we can, in addition, set $\varphi(z,\omega)x:= \omega^{-1}_{z+1}\circ \cdots\circ \omega^{-1}_0x$ for $z \in -\N$. 
\end{defn}
The construction in \eqref{eq:defRDSd} is the standard construction for RDS with discrete one-sided, respectively two-sided time $ \mathbb T_2$, cf. Section 2.1 in \cite{Arnold_book}. The necessary measurability of  $(\omega,x)\mapsto\varphi(1,\omega)x$ follows from the measurability of  $\omega \mapsto \varphi(1,\omega)x$ and the continuity of $x \mapsto \varphi(1,\omega)x$ for any $\omega \in \Omega$ by Lemma 1.1 in \cite{Crauel_book}. Due to the structure of $\P$, the RDS defined above is a product of random mappings, i.e. it has independent increments, cf. \cite{Arnold_book} Section 2.1.3. Hence, for any $n \in \N$, the $\sigma$-algebras $\mathcal F_n:=\sigma(\varphi(k,\,\cdot\,)\mid k \leq n)$ and $\vartheta_{-n}\mathcal F$ are independent. 

Since $\varphi$ are homeomorphisms in the case of $d=2$, the  forward and pullback attractor must be $S^{m-1}$ itself. It was proven in \cite{MyVolterra} for QVSOs that the minimal  forward and pullback point attractor is given by $\Lambda:=\{e_1, \ldots, e_m\}$. The analogous result for VPSOs can be found in \cite{MyDiss}. However, from the bounds in Proposition \ref{prop:estimatePBV} one can deduce that any compact set in $\intt S^{m-1}$ would be uniformly pushed towards $\partial S^{m-1}$ and since the faces of $S^{m-1}$ are nothing but simplices of smaller dimension the question of the relation between the attractor for sets of a certain Hausdorff dimension $\Delta$ and the faces of $S^{m-1}$ of dimension $\floor{\Delta}$ arises naturally in this context. Here, we take a first step in this direction and prove that $\Lambda$ is not only the (forward) point attractor, as proven in \cite{MyVolterra}, but also a  forward $\Delta$-attractor for some $\Delta>0$, which is the main result of this section.
\begin{thm}\label{thm:deltaatt}
 There exists a $\Delta>0$ such that $\Lambda=\{e_1,\ldots,e_m\}$ is the minimal  forward $\Delta$-attractor for the RDS $\varphi$ on $S^{m-1}$.
\end{thm}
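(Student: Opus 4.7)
My plan is to establish minimality and attraction separately. For minimality, the Volterra condition $p_j^{k,\ldots,k}=\delta_{jk}$ makes each vertex $e_k$ a fixed point of every $V\in\mathcal V$, so $\Lambda$ is strictly $\varphi$-invariant and compact. Each singleton $\{e_k\}$ is compact of Hausdorff dimension $0$, hence must be attracted by any forward $\Delta$-attractor $A$; combined with the strict invariance of $A$ and the fact that $e_k$ is fixed, this forces $e_k\in A(\omega)$ $\P$-a.s., yielding $\Lambda\subseteq A$ a.s. It therefore suffices to produce some $\Delta>0$ for which $\Lambda$ itself is a forward $\Delta$-attractor.

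For the attraction part I would combine three inputs: a uniform Lipschitz constant $L=L(d,m)$ valid for every $V\in\mathcal V$ (each $V$ being a polynomial of degree $d$ on the bounded simplex $S^{m-1}$ with coefficients in $[0,1]$); the purebred quadratic bound $(Vx)_k\le\binom{d}{2}x_k^2$ for $V\in\mathcal V_k$ from Proposition~\ref{prop:estimatePBV}; and the known forward point-attractor property of $\Lambda$ from \cite{MyVolterra, MyDiss}. The first step is to upgrade this pointwise convergence to a uniform exponential tail
\[
\sup_{x\in S^{m-1}}\P\bigl(\dd(\varphi(n,\omega)x,\Lambda)>\epsilon\bigr)\le C(\epsilon)\,q^n
\]
for some $q\in(0,1)$. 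The idea is a Lyapunov argument on a function such as $f(x)=-\log(\max_k x_k)$: Proposition~\ref{prop:estimatePBV} together with the SLLN-density $\nu_k$ of purebred-$k$ steps along the i.i.d.\ driving sequence produces a negative drift for $f$ whenever the state is away from $\Lambda$, with exponential concentration from standard large-deviation estimates for i.i.d.\ bounded increments.

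With such an exponential tail in hand, I would choose $\Delta>0$ small enough that $qL^{\Delta'}<1$ for some $\Delta'>\Delta$. For any compact $B$ with $\dim_H(B)\le\Delta$, I cover $B$ at time $n$ by balls of radius $r_n=\epsilon L^{-n}/2$; by the Lipschitz bound each such ball's image under $\varphi(n,\omega)$ has diameter at most $\epsilon$. Since $\mathcal H^{\Delta'}(B)=0$, a two-scale covering argument (starting from a cover by sets of varying small radii whose $\Delta'$-sum is small, then recombining into uniform-radius balls) produces such a cover with $N_n\le C(\epsilon)L^{n\Delta'}$ balls. A union bound over the ball centres using the exponential tail then gives
\[
\P\bigl(\dd(\varphi(n,\omega)B,\Lambda)>\epsilon\bigr)\le C' L^{n\Delta'}q^n,
\]
which is summable in $n$, and Borel--Cantelli yields $\dd(\varphi(n,\omega)B,\Lambda)\to 0$ $\P$-a.s.

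The main obstacle is the uniform exponential tail bound: trajectories starting near the separatrices between basins of attraction of different vertices can converge arbitrarily slowly, so the Lyapunov analysis must carefully exploit both the purebred contraction of Proposition~\ref{prop:estimatePBV} and the finite-type independence of the driving sequence. An induction on the simplex dimension $m-1$, exploiting that each face of $S^{m-1}$ is $\varphi$-invariant and itself a simplex of strictly lower dimension, appears necessary to reduce the problem to simpler configurations in these slow-convergence regions.
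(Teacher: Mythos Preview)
Your minimality argument is fine and matches the paper's reasoning.

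The attraction argument has a genuine gap in the covering step. You want to cover $B$ at each time $n$ by balls of \emph{uniform} radius $r_n=\epsilon L^{-n}/2$ and claim, from $\mathcal H^{\Delta'}(B)=0$, that this can be done with $N_n\le C(\epsilon)L^{n\Delta'}$ balls. But $N_n$ is the covering number $N(B,r_n)$, which is governed by the upper \emph{box-counting} dimension of $B$, not by its Hausdorff dimension. The ``two-scale covering argument'' you sketch cannot produce such a bound: a Hausdorff cover with small $\Delta'$-sum may consist of arbitrarily many very small sets, and enlarging or merging them to uniform radius does not control their cardinality. There exist compact (indeed countable) subsets of $[0,1]$ with $\dim_H=0$ and upper box dimension equal to $1$; for the analogues of such sets in $S^{m-1}$ your union bound $C'L^{n\Delta'}q^n$ is simply unavailable for any $\Delta'<1$, and Borel--Cantelli cannot be invoked. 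Note that the theorem must in particular handle all countable compact sets, so this is not a marginal case.

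The paper sidesteps this entirely by proving a bound in which the exponent sits on the \emph{radius} rather than on time: combining Proposition~\ref{prop:diamondsconverge} with Corollary~\ref{cor:reachdiamonds} (plus the crude Lipschitz growth of balls) yields, for a single ball $B_r(x)$, a bound of the form $\P\bigl(\varphi(n,\cdot)B_r(x)\not\to\Lambda\bigr)\le cr^\beta$ on the \emph{tail event} of non-convergence. One then fixes a \emph{single} Hausdorff cover of $H$ by sets $E_i$ with $\sum_i\diam(E_i)^{\beta}$ arbitrarily small and takes a union bound over $i$, not over $n$. This meshes exactly with the definition of $\mathcal H^{\beta}$ and requires no control whatsoever on the number of cover sets. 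Your Lyapunov/large-deviation intuition is essentially what drives Proposition~\ref{prop:convMCMC} and Corollary~\ref{cor:reachdiamonds}, but the paper packages it as a uniform bound on the hitting time $\P(\sigma_\gamma(x,N)>N)\le e^{-\alpha_2 N}$ rather than on the fixed-time distance $\P(\dd(\varphi(n,\cdot)x,\Lambda)>\epsilon)$; the uniformity in $x$ is obtained not via induction on $m$ but by coupling $l(\varphi(n,\cdot)x)$ from below to an explicit one-dimensional auxiliary Markov chain (Definition~\ref{def:MCMC}).
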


\begin{rem}
 Theorem \ref{thm:deltaatt} holds for every $\Delta <\beta$, for $\beta$ given by \eqref{eq:beta} in Remark \ref{not:crazylist}. Unfortunately the precise value of an optimal $\beta$ obtainable in the context of this proof  is not easy to come by even for special cases. At the end of the section we provide an exemplary choice of parameters that shows that $\beta \geq 1.54*10^{-4}$ for $m=d=2$ and $\nu = (1/2,1/2)$ the uniform distribution.
\end{rem}

Let us briefly discuss the strategy of the proof before indulging in the collection of preliminary results. Note that a set $H$ must be attracted by $\Lambda$, if we can find a cover of $H$ that is attracted by $\Lambda$. Since these covers are made up of (very small) sets, the bulk of work is in proving that such small sets converge to $\Lambda$ with high probability. This is done in two steps: Proposition \ref{prop:diamondsconverge} shows that a small neighborhood of $\Lambda$ will converge uniformly to $\Lambda$. We then have to invest additional work to guarantee that the small cover sets reach this neighborhood and stay sufficiently small to be completely contained in it, which is the content of Corollary \ref{cor:reachdiamonds}. A careful combination of these then yields the desired result.

For any $i \in \mm$ and $h \in [0,1]$ we define the sets
 \begin{align*}
  D^i_h:=\left\{x \in S^{m-1} \mid x_i \leq h\right\},\qquad 
  \bar U^i_h	 := \bigcap_{j \in \mm\setminus \{i\}} D^j_h \qquad \text{and} \qquad 
 \bar U_h 	 := \bigcup_{i \in \mm} \bar U^i_h.
 \end{align*}

We estimate the probability of the latter converging to $\Lambda$ uniformly under the action of $\varphi$. Recall that the measure $\nu$ was such that $\underline{\nu}:=\min\{\nu_1, \ldots, \nu_m\}>0$.

\begin{prop}\label{prop:diamondsconverge}
 For any $h \in [0,1]$
 \begin{align*}
  \P(\lim_{n \rightarrow \infty} \dd(\varphi(n,\,\cdot\,)\bar U_h, \Lambda) = 0 ) \;\geq\;1-m\kappa^{-\alpha_1}h^{\alpha_1}
 \end{align*}
 if we choose
  \begin{align*}
   0<\alpha_1 < \frac{-\log(1-\underline{\nu})}{\log(d)} \qquad \text{and} \qquad \kappa:=\min\left\{\left(\frac{1-(1-\underline{\nu})d^{\alpha_1}}{\underline{\nu}\binom{d}{2}^{\alpha_1}}\right)^{\frac{1}{\alpha_1}}, d^{-1}\right\}.
  \end{align*}
\end{prop}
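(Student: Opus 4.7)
The plan is to reduce the uniform statement to a one-dimensional supermartingale estimate applied independently to each coordinate $k \in \mm$, then combine via a union bound. The crucial observation is that both bounds from Proposition~\ref{prop:estimatePBV}, $(Vx)_k \leq \binom{d}{2} x_k^2$ when $k$ is purebred in $V$ and $(Vx)_k \leq d\, x_k$ otherwise, involve only the single coordinate $x_k$. Hence $(\varphi(n,\omega)x)_k$ can be dominated, uniformly in $x \in S^{m-1}$ with $x_k \leq h$, by a deterministic-given-$\omega$ upper envelope $\bar y_n^k$ defined by $\bar y_0^k := h$ and
\[
\bar y_n^k := \binom{d}{2}(\bar y_{n-1}^k)^2 \text{ if } \omega_n \in \mathcal V_k, \qquad \bar y_n^k := d\,\bar y_{n-1}^k \text{ otherwise.}
\]
Monotonicity of $y\mapsto dy$ and $y\mapsto \binom{d}{2}y^2$ on $[0,1]$ yields $(\varphi(n,\omega)x)_k \leq \bar y_n^k$ by induction on $n$.

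I would then set $M_n^k := (\bar y_n^k)^{\alpha_1}$ and $\tau_k := \inf\{n \geq 0 : \bar y_n^k > \kappa\}$, and verify
\[
\E[M_n^k \mid \mathcal F_{n-1}] \leq \bigl[\nu_k \binom{d}{2}^{\alpha_1}(\bar y_{n-1}^k)^{\alpha_1} + (1-\nu_k) d^{\alpha_1}\bigr] M_{n-1}^k.
\]
Using the cap $\kappa \leq d^{-1}$ one checks that $\binom{d}{2}^{\alpha_1}\kappa^{\alpha_1} \leq ((d-1)/2)^{\alpha_1} < d^{\alpha_1}$, so the bracket is strictly decreasing in $\nu_k$, and its supremum over $\nu_k \geq \underline{\nu}$ is attained at $\nu_k = \underline{\nu}$; the precise definition of $\kappa$ then makes this supremum at most $1$ whenever $\bar y_{n-1}^k \leq \kappa$. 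Thus $(M_{n \wedge \tau_k}^k)_n$ is a non-negative supermartingale with $M_0^k \leq h^{\alpha_1}$, and Doob's maximal inequality delivers $\P(\tau_k < \infty) \leq (h/\kappa)^{\alpha_1}$.

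To promote this tail bound to almost-sure convergence $\bar y_n^k \to 0$ on $\{\tau_k = \infty\}$, I would use that $M_n^k$ is a bounded non-negative supermartingale on this event and therefore converges almost surely to some limit $y_\infty^{\alpha_1} \in [0, \kappa^{\alpha_1}]$. If $y_\infty > 0$ on a set of positive measure, then $\bar y_n^k/\bar y_{n-1}^k \to 1$ there; but at every step with $\omega_n \notin \mathcal V_k$ this ratio equals $d > 1$, and such steps occur infinitely often almost surely as long as $\nu_k < 1$ (the edge case $\nu_k = 1$ is handled directly, since $h < \kappa$ together with pure squaring gives super-exponential decay). This contradiction forces $y_\infty = 0$ almost surely.

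A union bound over $k = 1, \ldots, m$ then produces an event $A$ of probability at least $1 - m\kappa^{-\alpha_1} h^{\alpha_1}$ on which $\bar y_n^k \to 0$ for every $k$. On $A$, for any $i \in \mm$ and any $x \in \bar U_h^i$, the initial bounds $x_k \leq h$ for $k \neq i$ propagate to $(\varphi(n,\omega)x)_k \leq \bar y_n^k \to 0$ uniformly in $x$, which forces $(\varphi(n,\omega)x)_i \to 1$ uniformly and hence $\sup_{x \in \bar U_h^i} \dd(\varphi(n,\omega)x, e_i) \to 0$. Taking the maximum over $i$ gives $\dd(\varphi(n,\omega)\bar U_h, \Lambda) \to 0$ on $A$. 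The delicate step is the almost-sure convergence on $\{\tau_k = \infty\}$: because the supermartingale coefficient can equal exactly $1$ (precisely when $\nu_k = \underline{\nu}$ and $\bar y^k$ approaches $\kappa$), no uniform contraction is available, so the qualitative ratio argument via the i.i.d.\ structure of non-purebred events is essential.
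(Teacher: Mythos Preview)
Your proof is correct and follows essentially the same route as the paper: both define a coordinate-wise upper envelope governed by the dichotomy $\binom{d}{2}y^2$ versus $dy$, stop at the threshold $\kappa$, use that $y^{\alpha_1}$ is a supermartingale below $\kappa$, and finish with a union bound over $k\in\mm$. Your treatment is in fact slightly more explicit in two places—the monotonicity of the bracket in $\nu_k$ (which the paper uses tacitly when replacing $\nu_k$ by $\underline{\nu}$) and the ratio argument forcing $y_\infty=0$ on $\{\tau_k=\infty\}$ (which the paper compresses into the assertion that the limit of the capped chain lies in $\{0,\kappa\}$)—while the paper extracts the probability bound from $\E[v_\infty]\le h^{\alpha_1}$ rather than Doob's maximal inequality, but these are cosmetic differences.
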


\begin{proof} For any set $C \subseteq S^{m-1}$ we define its `height' with respect to the coordinate $k \in \mm$ by $\text{hei}_k(C):=\sup\{x_k\mid x \in C\}$.
The key element of this proof are the bounds in Proposition  \ref{prop:estimatePBV} that become
 \begin{align*}
\text{hei}_k(VC) & \leq \binom{d}{2}\text{hei}_k(C)^2, \qquad \text{ if }  V\in \mathcal V_k \text{ and }\\
		  \text{hei}_k(VC) & \leq d \text{hei}_k(C), \hspace{40pt} \text{ if } V \in \mathcal V 
 \end{align*}
for every $k \in \mm$.
Without loss of generality assume $k=1$ and fix some value $h \in [0,\kappa]$ (the assertion of the proposition holds trivially for $h\geq\kappa$. 

For $\omega \in \Omega$ and $n \in \N_0$ set $H_0(\omega):= \text{hei}_1(D^1_{h})=h$ and
\begin{align*}
 H_{n+1}(\omega) &:= \begin{cases}
                     \binom{d}{2}H_n^2(\omega)\wedge 1, & \text{ if } \omega_{n+1} \in \mathcal V_1\\
                     dH_n(\omega)\wedge1, & \text{ otherwise.}
                    \end{cases}
\end{align*}
This defines a time-homogeneous Markov chain $(H_n)_{n \in \N_0}$ on $[0,1]$ with the following transition probabilities: If the unique $s^*$ such that $\binom{d}{2}(s^*)^2=ds^*$ is in $[0,1]$, we have
  \begin{align*}
  \P(H_2=\binom{d}{2} (s^*)^2\wedge1\mid H_1= s^*)\; & = \;\; \P(H_2=d  s^*\wedge1\mid H_1 = s^*)=1 \;\;\text{and} \\
  \P(H_2=\binom{d}{2} s^2\wedge1\mid H_1= s)\; & = \;\;1-\P(H_2=d  s\wedge1\mid H_1 = s)\; = \;\;\nu(\mathcal V_1)>0
 \end{align*}
 for all other $s \in [0,1]\setminus\{s^*\}$ in any case. This Markov chain is conveniently coupled to our RDS: 
 \begin{align*}
 \forall \omega \in \Omega,\;  n \in \N_0: \qquad \text{hei}_1(\varphi(n,\omega)D^1_h) \leq H_n(\omega).  
 \end{align*}
 Hence, the long-term behaviour of $(H_n)_{n \in \N_0}$ should be analyzed in more detail: Let $\alpha_1$ and $\kappa$ be as in the assumptions and note that they were chosen such that for any $s \leq \kappa$, we have $\binom{d}{2}s^{2}\wedge 1 = \binom{d}{2}s^{2}$, $ds\wedge1=ds$ and
 \begin{align}\label{eq:cleverkappaalpha}
  \underline{\nu}\binom{d}{2}^{\alpha_1}s^{\alpha_1} + (1-\underline{\nu})d^{\alpha_1} \leq 1.
 \end{align}
Define $\tau:=\inf\{n \in \N_0 \mid H_n >\kappa\}$ and with it the stopped process $\tilde H_n:=\min\{H_{n \wedge\tau},\kappa\}$ (for all $n \in \N_0$). $(\tilde H_n)_{n \in \N_0}$ is then again a time-homogeneous Markov chain. Furthermore, define $v:[0,1]\rightarrow [0,1]$ as $v(s):=s^{\alpha_1}$. Then $\E[v(\tilde H_{n+1}) \mid \mathcal F_n] = \kappa^{\alpha_1}$ on $\{\tau \leq n\}$ and
\begin{align*}
 \E[v(\tilde H_{n+1}) \mid \mathcal F_n]	
						& = \tilde H_n^{\alpha_1}\left(\underline{\nu}\binom{d}{2}^{\alpha_1}\tilde H_n^{\alpha_1} + (1-\underline{\nu})d^{\alpha_1}\right) \overset{\eqref{eq:cleverkappaalpha}}{\leq} \tilde H_n^{\alpha_1} = v(\tilde H_n)
\end{align*}
on $\{\tau>n\}$ for any $ n \in \N$. (Observe that these calculations include the case of $\{\tilde H_n = s^*\}$.)
 Therefore, $(v(\tilde H_n))_{n \in \N_0}$ is a bounded supermartingale and converges $\P$-almost surely to some (random variable) $v_{\infty}$. This implies that $(\tilde H_n)_{n \in \N_0}$ then converges to $H_{\infty}=v_{\infty}^{1/\alpha_1}\in \{0,\kappa\}$
and $ \P(v_{\infty} = 0) = 1- \P(v_{\infty}=\kappa^{\alpha_1})$.
Since $(v(\tilde H_n))_{n \in \N_0}$ is a supermartingale  and $h \leq \kappa$ we can estimate
\begin{align*}
 h^{\alpha_1} 		& = \E[v(\tilde H_0)] \geq \E[v_{\infty}]  =  \kappa^{\alpha_1} \P(v_{\infty}=\kappa^{\alpha_1})
 \end{align*}
 which implies $\P(v_{\infty} = 0) \geq 1-\kappa^{-\alpha_1}h^{\alpha_1}$. The coupling between $(H_n)_{n \in \N_0}$ and $\varphi$ then implies 
$ \P(\lim_{n \rightarrow \infty} \text{hei}_1(\varphi(n,\cdot)D^1_h = 0)  \geq 1-\kappa^{-\alpha_1}h^{\alpha_1}$.

 Since this argument holds for any $k \in \mm$ we have proven
 \begin{align*}
  \P\left(\forall\,i \in \mm: \; \lim_{n\rightarrow \infty} \text{hei}_i(\varphi(n,\,\cdot\,)D^i_h) = 0\right) \geq 1-m\kappa^{-\alpha_1}h^{\alpha_1}.
 \end{align*}
 Given that $\bar U^j_h \subset D^i_h$ for all $i \in \mm\setminus\{j\}$, this implies
  \begin{align*}
   \P(\lim_{n \rightarrow \infty} & \dd(\varphi(n,\,\cdot\,)\bar U_h, \Lambda) = 0 ) \geq\P(\forall\,j\in \mm: \; \lim_{n \rightarrow \infty} \dd(\varphi(n,\,\omega\,)\bar U^j_h,e_j) =0) \geq 1-m\kappa^{-\alpha_1}h^{\alpha_1}
 \end{align*}
 which completes the proof.
\end{proof}

Now that we have a result on the uniform convergence of sets sufficiently `close to' $\Lambda$, we are left to assure that the RDS reaches these sets `fast enough'. To this end, consider the following partition of $S^{m-1}$:
\begin{align*}
 Q_0	 := S^{m-1}\setminus \bar U_{d^{-l_0}d^{-1}},  \qquad  Q_l	:= \bar U_{d^{-l_0}d^{-l}}\setminus \bar U_{d^{-l_0}d^{-(l+1)}}, \qquad l \in \N,
\end{align*}
where the value of $l_0$ is assigned in Remark \ref{not:crazylist} below. In a slight abuse of notation, define a map $l:\,S^{m-1} \rightarrow \N_0$ as $ l(x) \,:=\,l$ if $x \in Q_l$.
Note that then $ x \in \bar U_{d^{-l_0}d^{-l}}$ if and only if  $ l(x) \geq l$. 
The idea is to characterize the convergence of $(\varphi(n,\,\cdot\,)x)_{n\in \N_0}$  through $(l(\varphi(n,\,\cdot\,)x))_{n\in \N_0}$, since
\begin{align*}
 \lim_{n \rightarrow \infty} \varphi(n,\,\cdot\,)x \in \Lambda \qquad \Leftrightarrow \qquad \lim_{n \rightarrow \infty}l(\varphi(n,\,\cdot\,)x) = \infty.
\end{align*}

 For this purpose, we construct a Markov chain $(L_n)_{n \in \N_0}$ that dominates $(l(\varphi(n,\,\cdot\,)x))_{n\in \N_0}$ in the following sense: Fix $\gamma >0$. For $x \in S^{m-1}$, let $N \in \N$ set
\begin{align*}
 \sigma_{\gamma}(x,N)	:& = \inf\left\{ n \in \N_0 \mid \varphi(n,\,\cdot\,)x \in \bar U_{d^{-l_0}d^{-\gamma N}}\right\} = \inf\left\{ n \in \N_0 \mid l(\varphi(n,\,\cdot\,)x) \geq \gamma N\right\},
\end{align*}
and similarly $\tau_{\gamma}(N)	: = \inf\left\{n \in \N_0 \mid L_n \geq \gamma N\right\}$.
Then $(L_n)_{n \in \N_0}$ is intended to be such that 
\begin{align}\label{eq:relationLandphi}
 \P(\sigma_{\gamma}(x,N) \leq N) \;\;  \geq \;\;  \P(\tau_{\gamma}(N) \leq N \mid L_0 = 0)
\end{align}
for every $x \in S^{m-1}$ and $N \in \N$. To this end, we need to construct $(L_n)_{n \in \N_0}$ with a weaker drift towards $\infty$ than $(l(\varphi(n,\,\cdot\,)x))_{n\in \N_0}$, but sufficiently strong to still reach high levels in a `short' amount of steps.

What follows is a list of all parameters (and respective conditions) used in the forthcoming deliberations. We invite the reader to skip it and only return to it for reference, when the parameters appear further on.

\begin{rem}\label{not:crazylist}
Recall that the measure $\nu$ on $\mathcal V$ was chosen such that $\underline{\nu} := \min\{\nu_1, \ldots, \nu_m\}>0$.
 With this then let
  \begin{align*}
   \alpha_1 \in \left]\,0, \frac{-\log(1-\underline{\nu})}{\log(d)}\right[ \quad \text{and set} \quad \kappa:=\min\left\{\left(\frac{1-(1-\underline{\nu})d^{\alpha_1}}{\underline{\nu}\binom{d}{2}^{\alpha_1}}\right)^{\frac{1}{\alpha_1}}, d^{-1}\right\}
   \end{align*}
   as in Proposition \ref{prop:diamondsconverge}. Then define $p:=\underline{\nu}^{m-1} (>0)$.
   
 In the case of $m=2$, let $\mu_2:=0$, otherwise choose any $  \mu_2 \in ]0,1[ $ 
 and subsequently $ \lambda \in\left]0\,,\, -\log(1-p)/(m-1+\mu_2)\right[$.
 Note that this was choses such that
 \begin{align}\label{eq:lambdaprop}
  e^{\lambda(m-1+\mu_2)}(1-p) <1.
 \end{align}
 Choose
 \begin{align*}
  l_1 > -\frac{1}{\lambda}\log\left(\frac{1-e^{\lambda(m-1+\mu_2)}(1-p)}{p}\right) \quad (>0 \text{ by } \eqref{eq:lambdaprop})
 \end{align*}
 and note that this implies
 \begin{align}\label{eq:l1prop}
   e^{\lambda(m-1+\mu_2)}(1-p) + e^{-\lambda l_1}p < 1.
 \end{align}
 Define $ l_0:=\ceil{l_1-1+\mu_2+2(m-1)}$ as well as
 \begin{align*}
  M 	
	& := (m-1)\left\lceil\frac{2}{\log(2)}\log\left(\frac{2\log(d)}{\log(2)^2}\max\left\{m,l_0+1\right\}\right)\right\rceil-1.
 \end{align*}
 Set $ q := \underline{\nu}^{M+1} $ to then choose $  \mu \in\left]0\,,\,-\frac{1}{\lambda}\log\left(1-q+e^{-\lambda}q\right)\right[\;\subset \; ]0,1[$ in order for
 \begin{align}\label{eq:muprop}
  e^{\lambda \mu}(1-q)+e^{-\lambda(1-\mu)}q <1
  \end{align}
   to hold. Set
  \begin{align*}
  A	& := e^{-\frac{\lambda\mu}{M}} \;\;(<1),\\
  B 	& := e^{\lambda \mu}(1-q)+e^{-\lambda(1-\mu)}q <1, \text{ by \eqref{eq:muprop}},\\
  C	& := e^{-\lambda\frac{\mu_2}{m-2}}\;\; (<1), \text{ for } m\geq 3 \text{ and } C:= 0, \text{ for }m=2,\\
  D	& := e^{\lambda(m-1+\mu_2)}(1-p) + e^{-\lambda l_1}p<1, \text{ by \eqref{eq:l1prop}},\\
  E	& := \max\{A,B,C,D\}\;\;(<1).
  \end{align*}  
  Furthermore, choose $\gamma \in \left]0, -\log(E)/\lambda\right[$  and observe that this was chosen such that 

 \begin{align*}
  \alpha_2:= -(\lambda\gamma + \log(E))  \quad \text{and therefore} \quad \alpha_3:=\min\{\alpha_1,\alpha_2\}
 \end{align*}
 is positive. Finally, define
  \begin{align}\label{eq:beta} 
  c:= \exp\left(\frac{-\alpha_3l_0\log(d)}{(1+\gamma+\frac{\log(m)}{\log(d)})}\right) \quad \text{and}\quad \beta :=\frac{\alpha_3}{(1+\gamma+\frac{\log(m)}{\log(d)})}\;\;(>0).
  \end{align}
\end{rem}

Now we are ready to define the Markov chain that is to, in a sense, dominate our RDS.

\begin{defn}\label{def:MCMC}
 For the parameter choice given in Remark \ref{not:crazylist}, let $(L_n)_{n \in \N_0}$ be the time-homogeneous Markov chain (on the probability space $(\Omega,\mathcal F,\P)$) with state space
 \begin{align*}
  \Xi:=\left\{\;i\frac{\mu}{M} \;\Big\mid\; i = 0, \ldots, M\;\right\} \cup \bigcup_{l \in \N}\left\{\;l+i\frac{\mu_2}{m-2}\;\Big\mid i = 0, \ldots, m-2\right\}
 \end{align*}
 (using the convention of $0\cdot \infty = 0$) defined by the following transition probabilities: For any $a, b \in \Xi$ with $a\neq b$
 \begin{align*}
   \P(L_1 = b \mid L_0=a) = \begin{cases}
                             1,		& \text{ if } b-a = \frac{\mu}{M} \text{ (and } a<1\text{)},\\
                             q,		& \text{ if } a = \mu \text{ and } b = 1,\\
                             1-q,	& \text{ if } a = \mu \text{ and } b = 0,\\
                             1,		& \text{ if } b-a = \frac{\mu_2}{m-2} \text{ (and } a\geq 1, b\nin \N\text{)}, \star\\
                             p,		& \text{ if } a -\mu_2\in \N \text{ and } b = 2(a-\mu_2)-2(m-1)+l_0\\
                             1-p,	& \text{ if } a -\mu_2\in \N \text{ and } b = \max\{a-\mu_2-(m-1),0\}.
                            \end{cases}
 \end{align*}
\end{defn}
Note that, since we assumed $a\neq b$ and $\mu_2=0$ in the case of $m=2$, the transition marked by $\star$ is only possible for $m\geq 3$. 
\begin{prop}\label{prop:convMCMC}
 Let $(L_n)_{n \in \N_0}$  be the Markov chain defined above. 
 
 \noindent For every $N \in \N$ we then have
 \begin{align*}
  \P(L_N \geq \gamma N \mid L_0=0) \; \geq \; 1-e^{-\alpha_2N}.
 \end{align*}
\end{prop}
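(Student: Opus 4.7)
The plan is to construct an exponential supermartingale adapted to the transition structure of $(L_n)_{n\in\N_0}$ and then apply Markov's inequality. With $E$ as defined in Remark~\ref{not:crazylist}, I would introduce
\begin{equation*}
M_n \,:=\, E^{-n}\,e^{-\lambda L_n}, \qquad n\in\N_0,
\end{equation*}
and show that $(M_n)$ is a non-negative supermartingale with respect to the natural filtration of $(L_n)$. Since $(L_n)$ is time-homogeneous Markov, this reduces to verifying the one-step drift estimate
\begin{equation*}
\E\!\left[e^{-\lambda L_{n+1}}\,\big|\,L_n = a\right] \;\le\; E\,e^{-\lambda a} \qquad \text{for every } a\in\Xi.
\end{equation*}

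I would verify this inequality by splitting into the four types of transitions in Definition~\ref{def:MCMC}. The constants $A, B, C, D$ of Remark~\ref{not:crazylist} are precisely the corresponding ratios (or upper bounds thereof). At states $a=i\mu/M$ with $i<M$ the deterministic step yields ratio $A=e^{-\lambda\mu/M}$; at $a=\mu$ the two-valued random jump yields exactly $B=e^{\lambda\mu}(1-q)+e^{-\lambda(1-\mu)}q$; at states $a=l+i\mu_2/(m-2)$ with $l\ge 1$ and $i<m-2$ the deterministic step gives $C=e^{-\lambda\mu_2/(m-2)}$ (this case is vacuous when $m=2$); and at $a=l+\mu_2$ with $l\in\N$, a short computation using $l_0\ge l_1-1+\mu_2+2(m-1)$ together with $l\ge 1$ shows the ratio is at most $D=e^{\lambda(m-1+\mu_2)}(1-p)+e^{-\lambda l_1}p$. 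Each of $A,B,C,D$ is strictly less than $1$ by the choices in Remark~\ref{not:crazylist}, so $E=\max\{A,B,C,D\}<1$.

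With the supermartingale property in hand, taking expectations under $\P(\,\cdot\,\mid L_0=0)$ gives $\E[M_N\mid L_0=0]\le M_0 = 1$, i.e.\ $\E[e^{-\lambda L_N}\mid L_0=0]\le E^N$. The tail bound then follows by an exponential Markov estimate:
\begin{equation*}
\P(L_N < \gamma N\mid L_0=0) \;=\; \P\!\left(e^{-\lambda L_N}>e^{-\lambda\gamma N}\,\big|\,L_0=0\right) \;\le\; e^{\lambda\gamma N}E^N \;=\; e^{-\alpha_2 N},
\end{equation*}
by the very definition $\alpha_2=-(\lambda\gamma+\log E)$, which is the asserted bound.

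The main obstacle, and essentially the only nontrivial point, is the drift calculation at the states $a=l+\mu_2$, $l\in\N$: there $D$ is an \emph{upper} bound rather than an equality, and one must carefully track how the choice of $l_0$ forces the upward jump to $2l-2(m-1)+l_0$ to produce a gain of at least $l_1$ uniformly in $l\ge 1$, while simultaneously the downward jump to $\max\{l-(m-1),0\}$ is a loss of at most $m-1+\mu_2$ (the maximum being attained only for $l\ge m-1$, and smaller for $1\le l<m-1$). Once this bookkeeping is done, everything else is a routine exponential supermartingale argument.
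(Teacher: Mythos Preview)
Your proposal is correct and is essentially the same argument as the paper's: the paper estimates $\E_0[e^{-\lambda(L_n-L_{n-1})}\mid\mathcal G_{n-1}]\le E$ via the identical four-case analysis and then iterates conditioning, which is exactly the supermartingale property of $M_n=E^{-n}e^{-\lambda L_n}$ unpacked step by step, followed by the same exponential Markov inequality to conclude. Your careful handling of the $a=l+\mu_2$ case (upward gain at least $l_1$ via the choice of $l_0$, downward loss at most $m-1+\mu_2$) mirrors the paper's computation for the bound $D$.
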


Before we turn to the proof of this proposition, let us explain the connection between this Markov chain $(L_n)_{n \in \N_0}$ and $(l(\varphi(n,\,\cdot\,)x))_{n \in \N_0}$. This is best done in two steps, each corresponding to one area of the state space of $(L_n)_{n \in \N_0}$.

\paragraph{Step 1:} \textbf{ Behavior on $\bigcup_{l \in \N}\left\{\;l+i\frac{\mu_2}{m-2}\;\Big\mid i = 0, \ldots, m-2\right\}$.}

Let $ x \in \bar U_{d^{-l_0}d^{-l}}$, i.e. $l(x) \geq l$ for some $l \in \N$ arbitrary but fixed. Without loss of generality assume $x \in \bar U^m_{d^{-l_0}d^{-l}}$, i.e. for all $ i =1, \ldots, m-1$: $x_i \leq d^{-l_0}d^{-l}$.
Recall that by Proposition \ref{prop:estimatePBV} 
 \begin{align*}
h_k(VC)  \leq \binom{d}{2}h_k(C)^2, \; \text{ if }  V\in \mathcal V_k \quad  \text{ and }\quad 
		  h_k(VC)  \leq d h_k(C), \; \text{ if } V \in \mathcal V 
 \end{align*}
for every $k \in \mm$.
 Now choose $V_i\in \mathcal V_i$ for each $i = 1, \ldots, m-1$. If we apply these (in any order) to $x$, each of the first $m-1$ coordinates will be multiplied by $d$ $m-2$ times and squared once. The worst case occurs, when the squaring step is the last one, hence we can estimate
 \begin{align*}
  (V_{m-1}\circ\cdots\circ V_1x)_i 	& \leq \binom{d}{2}\left(d^{-l_0}d^{-l}d^{m-2}\right)^2 =d^{-l_0}d^{-(l_0 - 2(m-1) + 2l)}.
 \end{align*}
 Therefore, we know $  V_{m-1}\circ\cdots\circ V_1x \;\in\; \bar U_{d^{-l_0}d^{-(l_0 - 2(m-1) + 2l)}}$,
 which is equivalent to $l(V_{m-1}\circ\cdots\circ V_1x) \geq l_0 - 2(m-1) + 2l$. The probability of such a sequence of operators appearing consecutively is at least $\underline{\nu}^{m-1}$. Of course, $(\varphi(n,\,\cdot\,)x))_{n \in \N_0}$ might also move closer to $e_m$ with other combinations of operators (in particular with any permutation of the above), and thus the probability for $(l(\varphi(n,\,\cdot\,)x))_{n \in \N_0}$ to jump from $l$ to a value above $l_0 - 2(m-1) + 2l$ in $(m-1)$ steps is greater that the probability of $(L_n)_{n \in \N_0}$ to jump from $l$ to (exactly) $l_0 - 2(m-1) + 2l$. 

 On the other hand, we also have the following estimate for any sequence of $V_1, \ldots, V_{m-1} \in \mathcal V$:
 \begin{align*}
  (V_{m-1}\circ\cdots\circ V_1x)_i 	& \leq d^{-l_0}d^{-l}d^{m+1} = d^{-l_0}d^{-(l-(m-1))}
 \end{align*}
 for every $i =1, \ldots,m-1$. This implies $l(V_{m-1}\circ\cdots\circ V_1x ) \geq l-(m-1)$ which the reader will recognize as the complementary jump of $(L_n)_{n \in \N}$. Hence, $(L_n)_{n \in \N_0}$ will also jump farther to the left than $(l(\varphi(n,\,\cdot\,)x))_{n \in \N_0}$ in the case of the complementary event. Together this implies that $(l(\varphi(n,\,\cdot\,)x))_{n \in \N_0}$ has a stronger drift to the right (towards higher values) than $(L_n)_{n \in \N_0}$ on this part of the state space. Note that this reasoning holds true for all $m \geq 2$.
 
 \paragraph{Step 2:} \textbf{Behavior on $\left\{\;i\frac{\mu}{M} \;\Big\mid\; i = 0, \ldots, M\;\right\}$.}
 
The important observation for this part is that $M$ and $q$ were chosen such that 
 \begin{align*}
 \forall\,x \in S^{m-1}:\quad  \P(\varphi(M+1,\,\cdot\,)x \in \bar U_{d^{-l_0}d^{-1}}) \geq q.
 \end{align*}
 This follows in the case of QSOs from Proposition 3.2 in \cite{MyVolterra}. See also Proposition 3.18 and the remark thereafter in \cite{MyDiss} for the general case of PSOs. Hence, for any $x \in S^{m-1}$, the probability for $(l(\varphi(n,\,\cdot\,)x))_{n \in \N_0}$ to reach the value 1 in $(M+1)$ steps is greater or equal to the probability of $(L_n)_{n \in \N_0}$ reaching 1 in $(M+1)$ steps. Since $(\varphi(n,\,\cdot\,)x)_{n \in \N_0}$ is Markovian, we see again that also in this area it has a stronger drift to the right, i.e. the larger values, than $(L_n)_{n \in \N_0}$. 

 Combining the observations in Step 1 and Step 2, we conclude that $(\varphi(n,\,\cdot\,)x)_{n \in \N_0}$ has overall a stronger drift to the right than $(L_n)_{n \in \N_0}$ and \eqref{eq:relationLandphi} holds. Thus Proposition \ref{prop:convMCMC}  yields the following corollary:
 
 \begin{cor}\label{cor:reachdiamonds}
  Let $\gamma$ and $\alpha_2$ be as defined in  Remark \ref{not:crazylist}. Then for every $N \in \N$ and every $x \in S^{m-1}$:
 \begin{align*}
  \P(\,\exists\, k \leq N:\; \varphi(k,\,\cdot\,)x \in \bar U_{d^{-l_0}d^{-\gamma N}}) =  \P(\sigma_{\gamma}(x,N) \leq N) \; \geq \; 1-e^{-\alpha_2N}.
 \end{align*}
 \end{cor}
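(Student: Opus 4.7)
The plan is to treat this corollary as essentially a repackaging of Proposition \ref{prop:convMCMC} through the stochastic domination \eqref{eq:relationLandphi} that was justified in Step 1 and Step 2 immediately preceding the statement. No new estimates are needed; the work is in chaining three observations.

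First I would unpack the equality in the statement, which is just the definition of the hitting time. By construction $\sigma_\gamma(x,N)\leq N$ iff there is some $k\in\{0,1,\ldots,N\}$ with $l(\varphi(k,\cdot)x)\geq \gamma N$, and by the characterisation $x\in\bar U_{d^{-l_0}d^{-l}}\Leftrightarrow l(x)\geq l$ recorded right after the partition $\{Q_l\}_{l\in\N_0}$ was introduced, this is in turn equivalent to $\varphi(k,\cdot)x\in\bar U_{d^{-l_0}d^{-\gamma N}}$. So the leftmost probability and $\P(\sigma_\gamma(x,N)\leq N)$ genuinely coincide for each $x$ and $N$.

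Second I would invoke the stochastic domination \eqref{eq:relationLandphi}. Steps 1 and 2 before the statement of the corollary show that on both pieces of the state space $\Xi$ the Markov chain $(L_n)_{n\in\N_0}$ has a weaker drift to the right than $(l(\varphi(n,\cdot)x))_{n\in\N_0}$: on the block $\{i\mu/M : i=0,\ldots,M\}$ the comparison is underwritten by the $(M{+}1)$-step lower bound on $\P(\varphi(M+1,\cdot)x\in \bar U_{d^{-l_0}d^{-1}})$, and on the blocks indexed by $l\in\N$ it follows from the two estimates of Proposition \ref{prop:estimatePBV} applied to the $m-1$ consecutive operators that shrink the first $m-1$ coordinates. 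Taken together these give
\begin{equation*}
\P(\sigma_\gamma(x,N)\leq N)\;\geq\;\P(\tau_\gamma(N)\leq N\mid L_0=0)
\end{equation*}
for every $x\in S^{m-1}$ and every $N\in\N$.

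Third I would note the trivial inclusion $\{L_N\geq \gamma N\}\subseteq\{\tau_\gamma(N)\leq N\}$: if $L_N$ itself exceeds the threshold, then $\tau_\gamma(N)=\inf\{n:L_n\geq \gamma N\}\leq N$. Applying Proposition \ref{prop:convMCMC} therefore gives
\begin{equation*}
\P(\tau_\gamma(N)\leq N\mid L_0=0)\;\geq\;\P(L_N\geq \gamma N\mid L_0=0)\;\geq\;1-e^{-\alpha_2 N},
\end{equation*}
and chaining these three observations produces the claimed bound. There is no real obstacle: the substantive content — the Lyapunov/supermartingale estimate in Proposition \ref{prop:convMCMC} and the step-by-step coupling behind \eqref{eq:relationLandphi} — has already been done. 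The only point worth checking is that using $L_0=0$ on the right-hand side is legitimate uniformly in $x\in S^{m-1}$; this is correct because $L_0=0$ represents the worst-case starting layer $Q_0$ in the partition, so the domination bound from \eqref{eq:relationLandphi} applies to every $x$ with no loss.
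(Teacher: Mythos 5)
Your proposal is correct and follows essentially the same route as the paper: the corollary is obtained by combining the domination \eqref{eq:relationLandphi} established in Steps 1 and 2 with Proposition \ref{prop:convMCMC}, via the trivial inclusion $\{L_N\geq\gamma N\}\subseteq\{\tau_\gamma(N)\leq N\}$. Your explicit unpacking of the hitting-time equality and of the inclusion merely spells out what the paper leaves implicit.
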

 
\begin{proof}[Proof of Proposition \ref{prop:convMCMC}]
Abbreviate $\P_0 := \P(\;\cdot\;\mid L_0=0)$. We want to apply a large-deviation-type argument. Let all parameters be as in Remark \ref{not:crazylist}. Let $\mathcal G_n:=\sigma(L_k \mid k \leq n)$ for any $n \in \N_0$.
 For any $N \in \N$, we then have
 \begin{align}\label{eq:LDargument}
  \P_0(L_N < \gamma N)	
			& \leq e^{\gamma\lambda N} \E_0[e^{-\lambda L_N}] = e^{\gamma\lambda N} \E_0[e^{-\lambda \sum_{n=1}^N(L_n - L_{n-1})}]\notag\\
			& = e^{\gamma\lambda N} \E_0[\prod_{n=1}^{N-1} e^{-\lambda (L_n - L_{n-1})}\E_0[ e^{-\lambda (L_N - L_{N-1})}\mid \mathcal G_{N-1}]]
 \end{align}
 We can estimate the conditional expectation making use of the Markov property of $(L_n)_{n \in \N_0}$: Let $n \in \N$.
 On $\{L_{n-1} < \mu\}$:
 \begin{align*}	
  \E_0[ e^{-\lambda (L_n - L_{n-1})}\mid \mathcal G_{n-1}]	= e^{-\lambda(L_{n-1} + \frac{\mu}{M} - L_{n-1}}) = e^{-\lambda\frac{\mu}{M}} =:A<1.
 \end{align*}
 On  $\{L_{n-1} = \mu\}$:
 \begin{align*}
   \E_0[ e^{-\lambda (L_n - L_{n-1})} \mid \mathcal G_{n-1}]	& = e^{\lambda\mu}(1-q) + e^{-\lambda(1-\mu)}q =:B < 1 \quad \text{ by }\eqref{eq:muprop}.
 \end{align*}
 On $\{L_{n-1} \in [l, l+ \mu_2[\}$ for some $l \in \N$:
 \begin{align*}
  \E_0[ e^{-\lambda (L_n - L_{n-1})} \mid \mathcal G_{n-1}]	& = e^{-\lambda(L_{n-1} + \frac{\mu_2}{m-2} - L_{n-1})} e^{-\lambda \frac{\mu_2}{m-2}} =:C<1 .
 \end{align*}
 On $\{L_{n-1} = l+ \mu_2\}$ for some $  l \in \N$
 \begin{align*}
  \E_0[ e^{-\lambda (L_n - L_{n-1})} \mid \mathcal G_{n-1}]	& = e^{-\lambda(\max\{0,l-(m-1)\} - (l+\mu_2))}(1-p)+ e^{-\lambda(l_0-2(m-1)+2l)}p\\
								& \leq  e^{-\lambda(l-(m-1) - (l+\mu_2))}(1-p)+ e^{-\lambda(l_0-2(m-1)+2l)}p\\
								& \leq  e^{\lambda(m-1 +\mu_2)}(1-p)+ e^{-\lambda(l_0-2(m-1)+2)}p\\
								& \leq  e^{\lambda(m-1 +\mu_2)}(1-p)+ e^{-\lambda l_01}p=:D<1\quad \text{ by }\eqref{eq:l1prop}.
  \end{align*}
 Hence, if we define $E:=\max\{A,B,C,D\}$ we have 
 \begin{align*}
  \E_0[ e^{-\lambda (L_n - L_{n-1})} \mid \mathcal G_{n-1}] \leq E \quad \P\text{-almost surely for every }n \in \N.  
 \end{align*}
 If we plug this into \eqref{eq:LDargument} and iterate the argument we obtain
 \begin{align*}
  \P_0(L_N < \gamma N)		
				& \; \leq  e^{\gamma\lambda N} E \E_0[\prod_{n=1}^{N-1} e^{-\lambda (L_n - L_{n-1})}]
   \leq   e^{\gamma\lambda N} E^N =  e^{\gamma\lambda N+\log(E)N} = e^{-\alpha_2N}.
 \end{align*}
 Note that this line of arguments holds for all $m\geq 2$. For $m \geq 3$ this is clear. In the case of $m=2$, $\mu_2=0$, hence $[l, l+ \mu_2[ = \emptyset$, which reflects the fact, that the Markov-chain does not take these additional steps for $m=2$. In this case, $C=0$ by definition and hence correctly does not influence the value of $E$. \end{proof}

We have now completed all the preliminary work and move on to the proof of Theorem \ref{thm:deltaatt}. 
\begin{proof}[Proof of Theorem \ref{thm:deltaatt}]
First note that, if $\Lambda$ is a forward $\Delta$-attractor for some $\Delta>0$, then it is already the minimal (forward) $\Delta$-attractor, since $\dim_H(\Lambda)=0\leq \Delta$ and $\Lambda$ itself is (strictly) invariant under $\varphi$. Since $\Lambda$ is also compact, in order to prove the theorem, we need to show that for all $H \in \mathcal C_{\Delta}$:
\begin{align*}
 \lim_{n \rightarrow \infty}\dd(\varphi(n,\,\cdot\,)H,\Lambda) = 0 \qquad \P\text{-a.s.}
\end{align*}

 Let $H \in \mathcal C_{\Delta}$. As explained after the theorem, the aim is to prove the convergence under the action of $\varphi$ of suitable coverings with probability arbitrarily close to one since this implies the convergence of $H$ itself. The proof is divided in two parts: In part 1 we combine Proposition \ref{prop:diamondsconverge} and Corollary \ref{cor:reachdiamonds} to ensure the convergence to $\Lambda$ of sufficiently small `balls'. In part 2 we then use this result to obtain the convergence of a suitable cover and therefore of $H$ itself. 
 
 \noindent\textbf{ Part 1:} Corollary \ref{cor:reachdiamonds} yields the uniform convergence of a set `sufficiently close' to $\Lambda$, hence, any ball in that set will also converge, but we still have to assure that the ball will indeed reach this set. To this end, we have to translate the results obtained in Proposition \ref{prop:diamondsconverge} for points $x$ into a result for (small) balls around $x$. We denote by $B_h(x)$ the open ball around $x\in S^{m-1}$ with radius $h>0$. It is easy to check that such a ball has bounded growth under the action of the RDS $\varphi$, since the derivative of the latter is bounded. To be precise we have that $\varphi(1,\omega)B_h(x) \;\subseteq\;B_{dmh}(\varphi(1,\omega)x)$  for every $\omega \in \Omega$, $x \in S^{m-1}$ and $h>0$.
 In addition, note that 
 $ x \in \bar U_h$ implies $B_h(x)\subseteq \bar U_{2h}$.
 Combining these two considerations, we note that for every $\omega \in \Omega$, $x \in S^{m-1}$,  $h>0$, and $N \in \N$, 
 \begin{align*}
  \varphi(N,\omega)x \in \bar U_h \;\Rightarrow \;\varphi(N,\omega)B_{h(md)^{-N}}(x) \subseteq B_{h}(\varphi(N,\omega)x) \subseteq \bar U_{2h}.
 \end{align*}
 This allows us to conclude
 \begin{align}\label{eq:bound1}
  \P(\,\exists\, k \leq N:\; \varphi(k,\,\cdot\,)&B_{d^{-l_0}d^{-\gamma N}(dm)^{-N}}(x) \in \bar U_{2d^{-l_0}d^{-\gamma N}}) \notag\\
	   &\geq \P(\,\exists\, k \leq N:\; \varphi(k,\,\cdot\,)x \in \bar U_{d^{-l_0}d^{-\gamma N}}) \geq  1-e^{-\alpha_2N}
 \end{align}
 by Corollary \ref{cor:reachdiamonds}. With $\alpha_1>0$ and $\kappa>0$ chosen as in Remark \ref{not:crazylist}, Proposition \ref{prop:diamondsconverge} yields
 \begin{align}\label{eq:bound2}
   \P(\lim_{n \rightarrow \infty} \dd(\varphi(n,\,\cdot\,)\bar U_{2d^{-l_0}d^{-\gamma N}}, \Lambda) = 0 )
			    & \geq  1-m\kappa^{-\alpha_1}(2d^{-l_0})^{\alpha_1}e^{-\gamma \alpha_1\log(d)N}.
 \end{align}
 These two results can be combined with the following observation:
 \begin{align*}
  \Big\{\omega \in &\; \Omega  \,\Big\mid\lim_{N \rightarrow \infty}\dd(\varphi(n,\omega)B_{d^{-l_0}d^{-\gamma N}(dm)^{-N}},\Lambda)=0\Big\}\\
	    & \supseteq \left\{\omega \in \Omega \,\Big\mid \sigma_{\gamma}(x,N)(\omega)\leq N\right\} \\
	    & \qquad\cap \left\{\omega \in \Omega \,\Big\mid\lim_{n\rightarrow \infty}\dd(\varphi(n-\sigma_{\gamma}(x,N)(\omega),\vartheta_{\sigma_{\gamma}(x,N)(\omega)}\omega)\bar U_{2d^{-l_0}d^{-\gamma N}},\Lambda)=0\right\} \\
	    &= \bigcup_{k=1}^N\Big(\underbrace{\left\{\omega \in \Omega \,\Big\mid \sigma_{\gamma}(x,N)(\omega)=k\right\}}_{\in \mathcal F_k}\cap \underbrace{\vartheta_{-k}\left\{\omega \in \Omega \,\Big\mid\lim_{n\rightarrow \infty}\dd(\varphi(n,\omega)\bar U_{2d^{-l_0}d^{-\gamma N}},\Lambda)=0\right\}}_{\in \vartheta_{-k}\mathcal F} \Big).
 \end{align*}
 Note that we have to shift $\omega$ in the second set, since we want $\bar U_{2d^{-l_0}d^{-\gamma N}}$ to converge only \emph{once the ball has reached it}. Recall that the increments of $\varphi$ are independent under $\P$ and we have that for any $k \in \N_0$ $\mathcal F_k$ and $\vartheta_{-k}\mathcal F$ are independent. In a Markovian manner we  argue that our RDS `restarts' independently of the past once it reaches the set $ \bar U_{2d^{-l_0}d^{-\gamma N}}$. 

Since the union is disjoint independence yields,

 \begin{align*}
  \P\big(\lim_{N \rightarrow \infty} &\dd(\varphi(n,\omega)B_{d^{-l_0}d^{-\gamma N}(dm)^{-N}},\Lambda)=0\big) \\
				     & \geq  \P\big(\sigma_{\gamma}(x,N)\leq N \text{ and } \\
				     & \qquad\quad  \lim_{n\rightarrow \infty}\dd(\varphi(n-\sigma_{\gamma}(x,N)(\omega),\vartheta_{\sigma_{\gamma}(x,N)(\omega)}\omega)\bar U_{2d^{-l_0}d^{-\gamma N}},\Lambda)=0\big)\\
				     & = \P\big(\sigma_{\gamma}(x,N)\leq N \big)\\
				     & \qquad\quad \times \P\big(\lim_{n\rightarrow \infty}\dd(\varphi(n-\sigma_{\gamma}(x,N)(\omega),\vartheta_{\sigma_{\gamma}(x,N)(\omega)}\omega)\bar U_{2d^{-l_0}d^{-\gamma N}},\Lambda)=0\big)\\
				     & \geq (1-e^{-\alpha_2N})(1-m\kappa^{-\alpha_1}(2d^{-l_0})^{\alpha_1}e^{-\gamma \alpha_1\log(d)N})
 \end{align*}
 by \eqref{eq:bound1} and \eqref{eq:bound2}. Hence, we can write
 \begin{align*}
  \P\big(\lim_{N \rightarrow \infty} \dd(\varphi(n,\omega)B_{d^{-l_0}d^{-\gamma N}(dm)^{-N}},\Lambda)=0\big) \;\geq\; 1-e^{-\alpha_3N}
 \end{align*}
 for sufficiently large $N \in \N$, if we set  $ \alpha_3:=\min\{\alpha_1,\alpha_2\}$.
 In order to apply this to a covering, it is more useful to express the probability in terms of the radius of the ball itself. To this end, we rewrite
 \begin{align*}
  r  = d^{-l_0}d^{-\gamma N}(dm)^{-N} = d^{-l_0}d^{(-\gamma - 1 - \frac{\log(m)}{\log(d)})N}\qquad 
  \Leftrightarrow \qquad N  = - \frac{\log(rd^{-l_0})}{(1+\gamma+\frac{\log(m)}{\log(d)})}.
 \end{align*}
 Then, for sufficiently small $r$,
 \begin{align}\label{eq:chooser}
  \P\big(\lim_{N \rightarrow \infty} \dd(\varphi(n,\omega)B_{r},\Lambda)=0\big) \;& \geq\; 1-\exp\left(\alpha_3\frac{\log(rd^{-l_0})}{(1+\gamma+\frac{\log(m)}{\log(d)})}\right)
					     = \;1-c r^{\beta},
 \end{align}
 with 
 \begin{align*}
  c:= \exp\left(\frac{-\alpha_3l_0\log(d)}{(1+\gamma+\frac{\log(m)}{\log(d)})}\right) \qquad \text{ and } \qquad \beta :=\frac{\alpha_3}{(1+\gamma+\frac{\log(m)}{\log(d)})}>0.
 \end{align*}
\noindent \textbf{Part 2:} We are now ready prove the assertion combining the above. Recall that we chose $H \in \mathcal C_{\Delta}$.
The definition of the Hausdorff dimension implies that for every $\delta>0$, $\varepsilon_1>0$ and $\varepsilon_2>0$, there exists a cover (of open sets) $E_1, E_2, \ldots $ of $H$ such that 
 \begin{align*}
  \sum_{i\in \N}\diam(E_i)^{\Delta+\delta} &<\varepsilon_1\\
  \text{and }\quad \forall\, i \in \N:\; \diam(E_i)&<\varepsilon_2.
 \end{align*}
  The remainder of the proof then is, as life, a matter of right choices:  Let $\varepsilon >0$ and choose $\delta:= \beta-\Delta$. Set $\varepsilon_1:=\varepsilon/(c2^{-\beta})$ and choose $r$ in \eqref{eq:chooser} sufficiently small such that $cr^{\beta}< \varepsilon_1$ (and \eqref{eq:chooser} holds). Then set $\varepsilon_2:=2r$. Then there exists a covering (with open sets) $E_1, E_2, \ldots$ with
 \begin{align*}
  \forall i \in \N:\quad \diam(E_i)<\varepsilon_2 \\
  \text{and}\quad \sum_{i \in \N} \diam(E_i)^{\Delta+\delta}<\varepsilon_1.
 \end{align*}
 Then for every $ i \in \N$
 \begin{align*}
  \P(\lim_{n \rightarrow \infty}\dd(\varphi(n,\cdot)E_i,\Lambda)=0) \geq 1-c\left(\frac{\diam(E_i)}{2}\right)^{\beta} = 1-c2^{-\beta}\diam(E_i)^{\beta}
 \end{align*}
 since we chose their diameters sufficiently small for $\diam(E_i)<2r$.
 But this then implies
 \begin{align*}
  \P(\forall\, i \in \N:\; \lim_{n \rightarrow \infty}\dd(\varphi(n,\cdot)E_i,\Lambda)=0) &\geq 1- \sum_{i \in \N}\left(1-\P(\lim_{n \rightarrow \infty}\dd(\varphi(n,\cdot)E_i,\Lambda)=0)\right)\\
			& \geq 1-\sum_{i \in \N} c2^{-\beta}\diam(E_i)^{\beta}\geq 1-c2^{-\beta}\varepsilon_1 = 1-\varepsilon,
 \end{align*}
and therefore
\begin{align*}
 \P(\lim_{n \rightarrow \infty}\dd(\varphi(n,\cdot)H,\Lambda)=0) \geq \P(\forall\, i \in \N:\;& \lim_{n \rightarrow \infty}\dd(\varphi(E_i,\Lambda)=0) \geq 1-\varepsilon.
\end{align*}
Since this can be conducted for any $\varepsilon>0$, we have proven the claim.  
\end{proof}

\begin{rem}
 In the following we give a choice for the parameters of Remark \ref{not:crazylist} in the case $m=d=2$ $\nu =(1/2,1/2)$. For simplicity we only list those parameters whose precise value influences the value of the final $\beta$:
 
 \qquad  \qquad \begin{tabular}{ l l l }
  \qquad $\alpha_1 := 0.99$  		& \qquad \qquad $l_0 := 2$ 		& \qquad \qquad $B := 0.999791$ \\
  \qquad $p := 0.5 $ 			& \qquad \qquad $M := 7$ 		& \qquad \qquad $D := 0.989288$ \\
  \qquad $\mu_2 := 0 $ 			& \qquad \qquad $q := 2^{-8}$ 		& \qquad \qquad $\gamma := 10^{-10}$ \\
  \qquad $\lambda := 4*10^{-1}$ 	& \qquad \qquad $\mu := 0.0027$	& \qquad \qquad $ \alpha_3 := \alpha_2 := 1.54012*10^{-4}$\\
  \qquad $l_1 := 1.8$ 			& \qquad \qquad $A := 0.999846 $ 	& \qquad \qquad $ \beta := 1.54011*10^{-4}$.
\end{tabular}

\end{rem}

\bibliographystyle{alpha}

\bibliography{Bib_delta}

\end{document}